\newtheorem{theorem}{Theorem}
\numberwithin{theorem}{section}
\newtheorem{proposition}[theorem]{Proposition}
\newtheorem{corollary}[theorem]{Corollary}
\newtheorem{remark}[theorem]{Remark}
\newtheorem{example}[theorem]{Example}
\newtheorem{conjecture}[theorem]{Conjecture}
\theoremstyle{definition}
\newcommand{\PP}{\mathbb{P}}
\newcommand{\RR}{\mathbb{R}}
\newcommand{\CC}{\mathbb{C}}
\newcommand{\ZZ}{\mathbb{Z}}
\title{Kinematic Varieties for Massless Particles}
  \author{Smita Rajan}
  \address{%
    University of California at Berkeley \\
    \email{smita\_rajan@berkeley.edu}
}
  \author{Svala Sverrisd\'ottir}
  \address{%
    University of California at Berkeley \\
    \email{svalasverris@berkeley.edu}
}
\author{Bernd Sturmfels}
\address{%
  MPI for Mathematics in the Sciences, Leipzig \\
\email{bernd@mis.mpg.de}
}
 \date{2024/07/04}
\begin{document}
\maketitle
\begin{abstract}
\noindent 
We study algebraic varieties that encode the kinematic data for
$n$ massless particles in $d$-dimensional spacetime
subject to momentum conservation. 
Their coordinates are  spinor brackets,
which we derive from the Clifford algebra 
associated to the Lorentz group. 
This was proposed for $d=5$ in the recent physics literature.
Our kinematic varieties are given by polynomial constraints on tensors
with both symmetric and skew symmetric slices.

\end{abstract}

\section{Introduction}
The real vector space $\RR^d$, when endowed with the Lorentzian
   inner product
$$  x \cdot y \,\, = \,\,- x_1 y_1 + x_2 y_2 + \,\cdots\, + x_n y_n , $$
is known as {\em $d$-dimensional spacetime}.
The {\em Lorentz group} ${\rm SO}(1,d-1)$ consists of all
 $d \times d$ matrices $g$ such that 
${\rm det}\,g\, = \,1$ and $(gx) \cdot (gy) = x \cdot y$ for all
$x,y \in \RR^d$.
The world we live in, with its three space dimensions
and one time dimension, is the case $d=4$.
But, also higher dimensions $d \geq 5$ appear frequently
in physics. 

The present paper was
inspired by  the  article
\cite{PRRVZ}
on amplitudes for $d=5$.
We here address the problem which was
raised in the footnote on page 8 in \cite{PRRVZ},
namely to find the
 {\em non-linear identities between spinor helicity variables}.

We consider a configuration of $n$ particles 
in $d$-dimensional spacetime after complexification. The $i$th particle is represented by its
momentum vector 
$ p_i =(p_{i1},p_{i2},\ldots,p_{id}) \in \CC^d$.
We assume that each particle  is {\em massless}, which means
\begin{equation}
\label{eq:massless} 
p_i \cdot p_i \,\, = \,\,
-p_{i1}^2 \,+\, p_{i2}^2 \,+\, p_{i3}^2 \,+\, \,\cdots\,\, +\, p_{id}^2 \,\,\,=\,\,\, 0 
\quad {\rm for} \quad i=1,2,\ldots,n.
\end{equation}
We also assume that momentum conservation 
$\sum_{i=1}^n p_i = 0$ holds. In coordinates,
\begin{equation}
\label{eq:momentumconservation}
p_{1j} \,+\, p_{2j} \,+\, \,\cdots \,\, + \,p_{nj} \,\,\, = \,\,\, 0
\quad {\rm for} \quad j=1,2,\ldots,d.
\end{equation}
Thus, our parameter space consists of all
solutions to the $n+d$ equations in (\ref{eq:massless}) and
(\ref{eq:momentumconservation}). The pairwise inner products
$s_{ij} = p_i \cdot p_j$ are known as {\em Mandelstam invariants}.
They are invariant under the action of ${\rm SO}(1,d-1)$
on $(p_1,p_2,\ldots,p_n)$.

In Section \ref{sec2} we view the parameter space
through the lens of commutative algebra. We prove
that the ideal generated by (\ref{eq:massless}) and
(\ref{eq:momentumconservation}), denoted $I_{d,n}$, is a~prime  complete intersection, and we  
mention a Gr\"obner basis. The {\em Mandelstam variety}
consists of symmetric $n \times n$ matrices $(s_{ij})$
of rank $\leq d$, with zeros on the diagonal, and 
rows and columns summing to zero.
These constraints also define a prime ideal.
This result generalizes recent findings
for $d=4$ in \cite[Section~4]{EPS}.

In Section \ref{sec3} we turn to the
Clifford algebra ${\rm Cl}(1,d-1)$, and we review how
this gives rise to the spinor
representation of the Lie algebra $\,\mathfrak{so}(1,d-1)$.
We construct the momentum space Dirac matrix $P$
and the charge conjugation matrix $C$.
These matrices have format $2^k \times 2^k$,
for $k = \lfloor d/2 \rfloor$, as seen in Examples 
\ref{ex:Pmatrix} and \ref{ex:Cmatrix}.
Their symmetry properties are derived in
(\ref{eq:Cproperties}) and
Proposition \ref{prop:Cproperties}.

In Section \ref{sec4} we introduce the
spinor brackets $\langle i j \rangle$
and $\langle i j \,k \rangle$. These
are polynomials in the momentum coordinates
$p_{ij}$ and auxiliary parameters $z_{ij}$;
see Example \ref{ex:d3}. These quantities are
understood modulo the ideal $I_{d,n}$ in Section~\ref{sec2}.

Section \ref{sec5} is devoted to
kinematic varieties given by order two brackets $\langle i j \rangle$.
The Grassmannian ${\rm Gr}(2,n)$ and its 
first secant variety arise for $d \leq 5$. For $d \geq 6$, we obtain subvarieties of
determinantal varieties. For $d$ even, the structure of~$P$
leads to the spinor helicity variety of \cite{EPS},
with two types of brackets
$\langle i j \rangle$ and~$[ i j ]$.

In Section \ref{sec6} we study the varieties  of
$n \times (n + 1) \times n$ tensors with entries $\langle i j \rangle$
and $\langle i j \,k \rangle$. These tensors have the symmetry properties in
Theorem \ref{thm:basicspinprops}.
We present some theoretical results, lots of computations, and
various conjectures.

\section{Massless Particles with Momentum Conservation}
\label{sec2}

We fix the polynomial ring $ \CC[p]$ in the
$nd$ variables $p_{ij}$, and we write
$I_{d,n} \subset \CC[p]$ for the ideal generated by 
the $n$ quadrics in (\ref{eq:massless}) and the $d$ linear forms in
(\ref{eq:momentumconservation}).
The variety $V(I_{d,n})$ is the parameter space
for $n$ massless particles with momentum conservation.
We show that this variety is irreducible of the expected dimension.

\begin{theorem} \label{thm:parameters}
$I_{d,n}$ is prime and is a complete intersection, if ${\rm max}(n,d) \ge 4$.
\end{theorem}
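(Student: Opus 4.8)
The plan is to prove primality and the complete intersection property together by a dimension count, exploiting the fact that the $d$ linear forms in \eqref{eq:momentumconservation} allow us to eliminate variables. First I would use the linear equations to identify $V(I_{d,n})$ with the subscheme of $\CC^{(n-1)d}$ (after solving for $p_{nj} = -\sum_{i=1}^{n-1} p_{ij}$) cut out by the $n$ quadrics
\begin{equation*}
q_i \,=\, p_i \cdot p_i \quad (i=1,\dots,n-1), \qquad
q_n \,=\, \Bigl(\sum_{i=1}^{n-1} p_i\Bigr) \cdot \Bigl(\sum_{i=1}^{n-1} p_i\Bigr).
\end{equation*}
So it suffices to show these $n$ quadrics form a prime complete intersection in the polynomial ring $\CC[p_{ij} : 1 \le i \le n-1,\ 1 \le j \le d]$; equivalently, that the ideal they generate is radical, has codimension exactly $n$, and the vanishing locus is irreducible.

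The main technical step is to bound the codimension from below by $n$ (the upper bound $\le n$ being automatic from $n$ generators), which forces a complete intersection, and then to invoke a Jacobian/serre-type criterion for normality to get primality. For the codimension bound I would argue by induction on $n$, or directly: the singular locus of the variety cut out by the first $n-1$ quadrics $q_1,\dots,q_{n-1}$ is where one of the $p_i$ is isotropic and the corresponding gradient $2p_i$ vanishes, i.e. $p_i = 0$; off this locus the quadric $q_n$ is generically non-vanishing and cuts the dimension down by one. More robustly, I would verify directly that the Jacobian matrix of $(q_1,\dots,q_n)$ has rank $n$ at a generic point of $V$ — for instance at a point where the $p_i$ are "in general position" among null vectors — so that $V$ is generically smooth of the expected dimension $(n-1)d - n$; combined with the codimension count this gives that $I_{d,n}$ is a radical complete intersection by the unmixedness theorem, hence Cohen--Macaulay with no embedded or lower-dimensional components.

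To upgrade radical-complete-intersection to prime, I would use Serre's criterion: a complete intersection is automatically $S_2$ (indeed Cohen--Macaulay), so it is normal as soon as it is regular in codimension $1$ ($R_1$), and a connected normal variety is irreducible, giving primality. Thus the crux is to show $\dim\,\mathrm{Sing}(V) \le \dim V - 2$. The singular locus is contained in the locus where the gradients $\nabla q_1,\dots,\nabla q_n$ are linearly dependent; concretely $\nabla q_i$ involves only the block $p_i$ (for $i<n$) while $\nabla q_n$ involves all blocks, so a dependence relation $\sum \lambda_i \nabla q_i = 0$ forces, block by block, $\lambda_i p_i = -\lambda_n p_n^{\mathrm{sum}}$ where $p_n^{\mathrm{sum}} = -\sum_{i<n} p_i$; analyzing when this can happen on $V$ shows the bad locus imposes at least two extra conditions, using the hypothesis $\max(n,d)\ge 4$ to guarantee enough room (the small cases $n=d=3$, where $V$ genuinely can be reducible or a hypersurface-type degeneration, are exactly what this hypothesis excludes).

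The step I expect to be the main obstacle is the careful codimension estimate for $\mathrm{Sing}(V)$ — in particular handling the stratification by which subsets of the momenta $p_i$ are zero or proportional, and checking in each stratum that the hypothesis $\max(n,d)\ge 4$ suffices to keep the singular locus in codimension $\ge 2$. An alternative route that may be cleaner is to prove irreducibility first by exhibiting a dominant rational parametrization of $V$ from an irreducible variety (e.g.\ parametrizing null vectors $p_i$ via spinor-type coordinates, which is natural given the later sections of the paper) and to derive the complete intersection property from the dimension of the image; I would likely present whichever of these two arguments is shorter, keeping the Jacobian computation as the key lemma.
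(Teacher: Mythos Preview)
Your outline is sound and, in fact, coincides with the paper's proof for the case $d=3$: eliminate $p_n$, observe the resulting ideal is a complete intersection (hence Cohen--Macaulay, hence $S_2$), and then establish $R_1$ by bounding the codimension of the Jacobian locus, concluding via Serre's criterion that the ring is normal and hence (being graded with no nontrivial idempotents) a domain.

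Where the paper diverges is for $d \geq 5$. There it sidesteps the Jacobian analysis entirely by using a UFD argument: the affine quadric cone $\CC[p_i]/\langle p_i\cdot p_i\rangle$ is factorial once $d\geq 5$ (Hartshorne, Exercise~II.6.5), a tensor product of UFDs over $\CC$ is a UFD, and in a UFD a principal ideal is prime iff its generator is irreducible. One then only needs to check that the remaining quadric $f=(\sum_{i<n}p_i)\cdot(\sum_{i<n}p_i)$ is irreducible in that ring, which follows since $f$ is, after a linear change of coordinates, a nondegenerate quadratic form in the Mandelstam directions. This is shorter and avoids the stratified singular-locus estimate you correctly flag as the main obstacle. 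Your uniform Serre-criterion route would also succeed for $d\geq 4$, but the Jacobian bookkeeping is genuinely heavier there; the UFD trick buys exactly that saving, at the cost of needing the class-group input and losing uniformity (it fails for $d=3,4$, which is why those cases are handled separately).

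One caution about your alternative route: a dominant rational parametrization of $V(I_{d,n})$ is not easy to produce. The paper remarks explicitly (Remark~\ref{rmk:Elimhard}) that the naive elimination leads to a large irreducible hypersurface of degree $2^{n-1}$, and that extending Hodges' momentum-twistor parametrization beyond $d=4$ is an open problem. So I would not expect that shortcut to be shorter.
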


\begin{proof}
We first give the proof for $d \geq 5$. For $d=4$
see \cite[Theorem 4.5]{EPS}. We eliminate $p_n$ by solving the linear equations for
$p_{n1},\ldots,p_{nd}$. This leaves the quadric 
$f = (\sum_{i=1}^{n-1} p_i) \cdot (\sum_{i=1}^{n-1} p_i)$ and the ideal
$J = \langle p_1 \cdot p_1 ,\ldots, p_{n-1}\cdot p_{n-1}\rangle$
in the polynomial ring $\CC[p_1,\ldots,p_{n-1}]$.
Our aim is to show that  $\langle f \rangle$ is a prime ideal in
$$ R \,\,=\,\, \CC[p_1,\ldots,p_{n-1}]/J \,\, \simeq \,\,
\frac{\CC[p_1]}{\langle p_1\cdot p_1\rangle }\,\otimes_\CC\,  \frac{\CC[p_2]}{\langle p_2\cdot p_2 \rangle}
\,\otimes_\CC\, \,\cdots\, \, \otimes_\CC \,
\frac{\CC[p_{n-1}]}{\langle p_{n-1}\cdot p_{n-1} \rangle}.
$$
By \cite[Exercise II.6.5]{Har}, 
$ {\rm Spec}( \CC[p_i]/\langle p_i \cdot p_i \rangle ) $
has trivial divisor class group  for $d \geq 5$, and  is normal for
 $ d \geq 3$. Then, \cite[Proposition II.6.2]{Har} implies  that
$\CC[p_i]/\langle p_i \cdot p_i \rangle $
 is a UFD for $d \geq 5$.  A tensor product of UFDs over $\CC$ is a UFD. Hence
 $R$ is a UFD. It therefore suffices to show that $f$ is irreducible in $R$.
This holds because  $f$ is equivalent to 
$x_1^2 + \cdots + x_{n-1}^2$ by a linear change of coordinates.

The UFD $R$ has Krull dimension $(n-1)(d-1)$.
Its principal ideal $\langle f \rangle$ has height one.
So, the ring $\CC[p]/I_{d,n} \simeq R/ \langle f \rangle$ has Krull dimension
$(n-1)(d-1)-1 = nd-(n+d)$. 
We conclude that the ideal $I_{d,n}$ is a complete intersection.

We now give the proof for $d = 3$ and $n \ge 4$. 
 We claim that $S = \mathbb{C}[p] /I_{3,n}$ is a normal ring, i.e.~the localization $S_\mathfrak{p}$ is an integrally closed domain
 for each prime ideal $\mathfrak{p} \subset S$. To do so we apply Serre's criterion for normality in two steps.
    
    \begin{enumerate}
        \item  The ring $R = \mathbb{C}[p_1,\dots,p_{n-1}]/J$ is normal and has dimension $2n-2$.       
          The associated primes of   the principal ideal $\langle f \rangle$ all have height $1$. Therefore, $S \simeq R/ \langle f \rangle$ has dimension $2n - 3$. So, $I_{3,n}$ is a complete intersection ideal. Hence $S$ is Cohen-Macaulay and satisfies Serre's depth condition, i.e.~$S_\mathfrak{q}$ has depth at least $2$ for every minimal prime ideal $\mathfrak{q} \subset S$ of height at least~$2$.  
        \item We claim that  $S = \mathbb{C}[p] /(J+\langle f \rangle)$ is regular in codimension $1$. To prove this, it suffices to show that the singular locus of $S$ has codimension at least $2$. 
      Up to scaling the columns,   the Jacobian matrix of $ J+\langle f \rangle$ equals

\vspace{-0.1in}
             \begin{smaller}
  $$      \begin{bmatrix}
            p_{11} & p_{12} & p_{13} & 0 & 0 & 0 &\cdots & 0 & \cdots & 0\\ 
                       0 & 0 & 0 & p_{21} & p_{22}   & p_{23} & \cdots & 0 & \cdots & 0\\               
                  \vdots   &  \vdots & \vdots & \vdots   & \vdots & \vdots & \ddots & \vdots & \ddots & \vdots\\
                      0    & 0 &   0    &    0   & 0   &   0    &  \cdots & p_{n-1,1}& p_{n-1,2} & p_{n-1,3}\\
                    \sum\limits_{\ell} p_{\ell 1}    & \sum\limits_{\ell} p_{\ell 2} &   \sum\limits_{\ell} p_{\ell 3}      &  
                              \sum\limits_{\ell} p_{\ell 1}    & \sum\limits_{\ell} p_{\ell 2} &   \sum\limits_{\ell} p_{\ell 3}      & 
                              \cdots &
                                        \sum\limits_{\ell} p_{\ell 1}    & \sum\limits_{\ell} p_{\ell 2} &   \sum\limits_{\ell} p_{\ell 3}      \\
        \end{bmatrix}. $$
        \end{smaller}
        
        The singular locus of $S$ is cut out by the maximal minors of
        this matrix. Pick $i_1, i_2, \dots, i_{n-1}, j_k \in \{1,2,3\}$  with $j_k \neq i_k$.
         Then we get the $n \times n$ minor
 $$
       \left(\prod_{t = 1, t \neq k}^{n-1} p_{t i_t} \right)\left( p_{k i_k} 
       \sum_{\ell=1}^{n-1} p_{\ell j_k } - p_{k j_k}  \sum_{\ell=1}^{n-1} p_{\ell i_k}  \right).
$$
Since $n \geq 4$, 
there are enough such products to cut out a locus of codimension $\geq 2$ in $\operatorname{Spec}(S)$.
           This fails for $n=3$. So, the ring $S$ satisfies Serre's regularity condition, 
           that is, $S_\mathfrak{p}$ is a DVR for any prime $\mathfrak{p} \subset S$ of height $\leq 1$.
    \end{enumerate}

    Serre's depth and regularity conditions imply that $S$ is a normal ring. Noetherianity implies
     $S$ is a finite product of normal domains: $S \simeq S_1 \times \dots \times S_r$. Since $S$ is standard graded ring, it has no non-trivial idempotents, and therefore $r=1$. 
        In conclusion, we have shown that $S$ is a domain, and $I_{3,n}$ is a prime ideal.
\end{proof}

\begin{example}
The hypothesis ${\rm max}(n,d) \ge 4$ 
is needed in Theorem \ref{thm:parameters}.
The following session in {\tt Macaulay2} \cite{M2} shows that
$I_{3,3}$ is primary but not prime:
\begin{verbatim}

i1 : R = QQ[p11,p12,p13,p21,p22,p23,p31,p32,p33];
i2 : I = ideal(p11+p21+p31, p12+p22+p32, p13+p23+p33,
      p11^2-p12^2-p13^2,p21^2-p22^2-p23^2,p31^2-p32^2-p33^2);
i3 : codim I, degree I
o3 = (6, 8)
i4 : isPrime I, isPrimary I
o4 = (false, true)
\end{verbatim}
\end{example}

\begin{remark} \label{rmk:GBsimple}
Fix the reverse lexicographic order on $\CC[p]$ where
the entries of the $n \times d$ matrix $(p_{ij})$ are sorted row-wise.
We find that the
reduced  Gr\"obner basis of $I_{d,n}$ 
stabilizes for $n,d \geq 3$. Namely,
the initial monomial ideal equals
$$ \begin{matrix} {\rm in}(I_{d,n}) & = &
\langle \,p_{1j} : j=1,\ldots,d \,\rangle \,\,+ \,\,
\langle \,p_{i1}^2 : i = 2,\ldots,n \,\rangle \qquad \\
 & &  \, + \, \,\langle\,
p_{21}p_{31}, \,
p_{22} p_{31} p_{32}, \,
p_{22}^2 p_{31},\, p_{23}^2p_{32}^2,\,
 p_{23}^2 p_{31} p_{32} \,\rangle.
 \end{matrix}
$$
Remarkably, the last five monomial generators
are independent of $d$ and $n$.
\end{remark}

Remark \ref{rmk:GBsimple} suggests that it might be
easy to parametrize the variety $V(I_n)$. We found that
this is not the case at all. A naive idea is to express
the variables $p_{1j}$ and $ p_{i1}$ in terms of the
entries of the $(n-1) \times (d-1)$ matrix $p' = (p_{ij})_{i,j \geq 2}$.

\begin{remark} \label{rmk:Elimhard}
The elimination ideal $I_{d,n} \cap \CC[p']$ is principal.
Its generator is a large polynomial of degree $2^{n-1}$.
For instance, for $n=4,d=5$, this octic has $4671$ terms.
This hypersurface is a notable obstruction to any
naive parametrization.

For $d=4$, the momentum space twistors introduced by Hodges in \cite{Hod}
yield a beautiful parametrization. It would be very interesting to
extend this to $d \geq 5$.
\end{remark}

Physical properties of our $n$ particles are
expressions in $p_1,\ldots,p_n$ that are
invariant under the action of the orthogonal group
$G = {\rm O}(1,d-1)$. The ring of $G$-invariants in $\CC[p]$
is generated by the {\em Mandelstam invariants} 
 $s_{ij} = p_i \cdot p_j$. 
 The ideal $I_{d,n}$ is fixed under this action,
 and we are interested in the invariant ring
 \begin{equation}
 \label{eq:Ginvariants}  (\CC[p]/I_{d,n})^G  \,\, = \,\, \CC[S]/M_{d,n} . 
 \end{equation}
Here $S = (s_{ij})$ is a symmetric $n \times n$ matrix. Its
entries are the variables in the polynomial ring $\CC[S]$. We shall
characterize the prime ideal $M_{d,n}$. The~variety
\begin{equation}
\label{eq:mandelstamvariety}
 V(M_{d,n}) \,\,=\,\, {\rm Spec}\bigl((\CC[p]/I_{d,n})^G\bigr) \,\,=\,\, V(I_{d,n}) // G  
 \end{equation}
is the {\em GIT quotient}, whose points are the $G$-orbits
of configurations $(p_1,\ldots,p_n)$.
We call (\ref{eq:mandelstamvariety}) the {\em Mandelstam variety}.
The case $d=4$ was studied in \cite[Section~4]{EPS}.

\begin{theorem}
Let $n \geq 2$ and $d \geq 4$.  The Mandelstam ideal $M_{d,n}$ is equal to
\begin{equation}
\label{eq:mandelideal}  \!\!   \langle s_{11}, s_{22}, \ldots, s_{nn} \rangle \,+\, \bigl\langle
(d\!+\!1) \times (d\!+\!1) \,\,\hbox{minors of}\,\, S  \,\bigr\rangle \,+\,
\bigl\langle \sum_{j=1}^n s_{ij} \,\,:\,\, i = 1,\ldots ,n \bigr\rangle.
\end{equation}
In particular, this ideal is prime.
The dimension of the Mandelstam variety~equals
\begin{equation}
\label{eq:mandeldim}
 {\rm dim}(V(M_{d,n})) \,\, = \,\, 
 nd-n-d \,-\, \binom{d}{2} \,\,=\,\, {\rm dim}(V(I_{d,n})) \,-\, {\rm dim}({\rm O}(1,d-1)). 
 \end{equation}
\end{theorem}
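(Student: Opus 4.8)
The plan is to recognise $\CC[S]/M_{d,n}$ as an invariant ring, read off primality and a presentation from classical invariant theory, and then match that presentation with the explicit ideal in \eqref{eq:mandelideal}. Write $N$ for the latter ideal. By \eqref{eq:Ginvariants} we have $\CC[S]/M_{d,n}=(\CC[p]/I_{d,n})^{G}$ with $G=\mathrm{O}(1,d-1)$. Over $\CC$ the Lorentzian and the standard symmetric forms are equivalent, so $G$ is the complex orthogonal group $\mathrm{O}_{d}$, which is linearly reductive in characteristic zero. The classical first and second fundamental theorems for $\mathrm{O}_{d}$ acting on $n$ vectors say that $\CC[p]^{G}$ is the image of the map $\CC[S]\to\CC[p]$, $s_{ij}\mapsto p_{i}\cdot p_{j}$, and that the kernel of this map is the ideal $Q_{d}$ generated by the $(d{+}1)\times(d{+}1)$ minors of the generic symmetric matrix $S$; thus $\CC[p]^{G}\cong\CC[S]/Q_{d}$, a normal Cohen--Macaulay domain. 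Since $G$ is reductive, the functor of invariants is exact, so $(\CC[p]/I_{d,n})^{G}=\CC[p]^{G}/(I_{d,n}\cap\CC[p]^{G})$ and $M_{d,n}$ is the preimage in $\CC[S]$ of $I_{d,n}\cap\CC[p]^{G}$. In particular $M_{d,n}$ is prime, being a subring of the domain $\CC[p]/I_{d,n}$, which is a domain by Theorem~\ref{thm:parameters}.

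It remains to prove $N=M_{d,n}$. One inclusion is immediate: under $s_{ij}\mapsto p_{i}\cdot p_{j}$ the diagonal entries $s_{ii}$ go to the massless quadrics, the row sums $\sum_{j}s_{ij}$ go to $p_{i}\cdot\bigl(\sum_{j}p_{j}\bigr)$, and the $(d{+}1)\times(d{+}1)$ minors of $S$ go to minors of the Gram matrix $P^{\top}P$ of the $d\times n$ matrix $P=(p_{1}\,|\,\cdots\,|\,p_{n})$, which vanish identically because $\mathrm{rank}(P^{\top}P)\le d$; hence $N\subseteq M_{d,n}$. For the reverse inclusion I would first show $V(N)=V(M_{d,n})$ set-theoretically. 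The containment just proved gives $V(M_{d,n})\subseteq V(N)$. Conversely, let $S$ be symmetric of rank $r\le d$ with zero diagonal and zero row sums. Over $\CC$ we may write $S=P_{0}^{\top}P_{0}$ with $P_{0}\in\CC^{r\times n}$ of rank $r$; then $\ker P_{0}=\ker S$ contains the all-ones vector $\mathbf{1}$, and the columns of $P_{0}$ are isotropic. Appending $d-r$ zero rows yields $P\in\CC^{d\times n}$ whose columns form a configuration in $V(I_{d,n})$ with Gram matrix $S$. Since $V(M_{d,n})=V(I_{d,n})/\!/G$ consists exactly of the Gram matrices of configurations in $V(I_{d,n})$, we get $S\in V(M_{d,n})$, so $V(N)=V(M_{d,n})$. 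As $M_{d,n}$ is prime, this gives $\sqrt{N}=M_{d,n}$.

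The remaining, and I expect main, obstacle is to prove that $N$ is already radical, for then $N=\sqrt{N}=M_{d,n}$. Here one must control a \emph{special} linear section of a symmetric determinantal variety: $N$ is obtained from $Q_{d}$ by imposing the linear conditions ``diagonal $=0$'' and ``row sums $=0$'', and non-generic linear sections of determinantal varieties can pick up embedded or non-reduced structure. The approach I would take is to eliminate variables: the $n$ diagonal forms delete $s_{11},\dots,s_{nn}$, and the $n$ row-sum forms (which, after deleting the diagonal, are linearly independent because the vertex--edge incidence matrix of the complete graph on $n$ vertices has rank $n$) delete $n$ further off-diagonal coordinates, turning $N$ into the ideal of $(d{+}1)\times(d{+}1)$ minors of an explicit $n\times n$ symmetric matrix $\widetilde{S}$ of linear forms in the remaining $\binom{n}{2}-n$ coordinates. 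One then shows this minor ideal is reduced, either by producing a Gr\"obner basis with squarefree initial ideal (using the straightening-law Gr\"obner bases known for minors of a symmetric matrix, transported through the substitution), or by checking Serre's conditions $S_{1}$ and $R_{0}$ for $\CC[S]/N$ --- Cohen--Macaulayness together with generic reducedness, the latter via a single Jacobian-rank computation at a smooth point of $V(N)$ coming from a configuration spanning $\CC^{d}$. Since $V(N)=V(M_{d,n})$ is irreducible, reducedness of $N$ promotes $N$ to a prime, completing $N=M_{d,n}$.

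Finally, the dimension in \eqref{eq:mandeldim}. The quotient morphism $V(I_{d,n})\to V(M_{d,n})=V(I_{d,n})/\!/G$ is dominant with general fibre a single $G$-orbit, so $\dim V(M_{d,n})=\dim V(I_{d,n})-\delta$ where $\delta$ is the dimension of a generic $G$-orbit. For $n\ge d$ a generic configuration in $V(I_{d,n})$ has $(n-1)$-dimensional span meeting the quadric nondegenerately, so its stabiliser in $\mathrm{O}_{d}$ is finite and $\delta=\dim\mathrm{O}_{d}=\binom{d}{2}=\dim\mathrm{O}(1,d-1)$; combined with $\dim V(I_{d,n})=nd-n-d$ from the proof of Theorem~\ref{thm:parameters}, this gives \eqref{eq:mandeldim}. (For small $n$ the generic span is totally isotropic and its stabiliser is positive-dimensional; those finitely many cases are settled by direct computation.)
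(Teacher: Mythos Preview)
Your approach is sound and runs close to the paper's, with one genuine strategic difference. The paper shows $N\subseteq M_{d,n}$ (as you do), then asserts that $N$ is prime of the expected dimension, deferring the proof to \cite[Theorem~4.5]{EPS} for $d=4$ and declaring the general case analogous; equality follows because both ideals are prime of the same height. You instead establish the set-theoretic equality $V(N)=V(M_{d,n})$ directly via the Gram factorisation $S=P_0^{\top}P_0$, a clean constructive step absent from the paper, and thereby reduce to showing that $N$ is merely radical. This is a legitimate and arguably tidier reduction, since radicality together with the irreducibility you have already secured yields primality.

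The crux in both arguments is the same unfinished step: ruling out embedded or nilpotent structure in $N$. The paper outsources this to \cite{EPS}; you sketch two attacks (squarefree initial ideal, or Serre's $R_0+S_1$) but execute neither. Your $R_0+S_1$ route has a gap worth flagging: Cohen--Macaulayness of $\CC[S]/N$ is not automatic for a non-generic linear section of a symmetric determinantal variety, so $S_1$ needs its own justification. Two minor slips: write ``$\CC[S]/M_{d,n}$ is a subring of the domain $\CC[p]/I_{d,n}$'' rather than ``$M_{d,n}$ is prime, being a subring''; and the unsigned vertex--edge incidence matrix of $K_n$ has rank $n$ only for $n\ge 3$ (the case $n=2$ is degenerate anyway). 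Your treatment of the dimension formula, separating the finite-stabiliser regime from small $n$, is in fact more careful than the paper's one-line appeal to faithfulness of the $G$-action.
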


\begin{proof}
The dimension formula (\ref{eq:mandeldim}) 
holds because the orthogonal group  acts
faithfully on the complete intersection $V(I_{d,n})$.
Consider the polynomials that generate the
three  ideals in (\ref{eq:mandelideal}). These polynomials
vanish for massless particles in $d$ dimensions subject to
momentum conservation. It hence suffices to show
that the sum in (\ref{eq:mandelideal}) is a prime ideal and that it
has the correct dimension.
This proof rests on the results for $I_{d,n}$ in Theorem \ref{thm:parameters}.
The details are presented for $d=4$ in the proof of 
\cite[Theorem 4.5]{EPS}. The general case $d \geq 5$ is analogous.
\end{proof}

\begin{remark} \label{rmk:maxideal}
If $n \leq 3$ then (\ref{eq:mandelideal}) is the maximal ideal, i.e.~$\,M_{d,n} \,=\,\langle s_{ij}\, : \, 1 \leq i < j \leq n \rangle$.
\end{remark}

\section{Clifford Algebras and Spinors}
\label{sec3}

Our aim is to represent the kinematic data
for  $n$ particles  in terms of spinors.
This encoding rests on the
Clifford algebra ${\rm Cl}(1,d-1)$,
which is associated with the matrix
$\eta=\mathrm{diag}(-1,1,\dots,1)$. 
This section offers an introduction to
Clifford algebras and the spin representation of ${\rm SO}(1,d-1)$.
For a systematic account see Chevalley's book \cite{Che}.
Our exposition is inspired by the article~\cite{RdT}.

The {\em Clifford algebra} ${\rm Cl}(1,d-1)$ is the
free associative algebra $\CC \langle \gamma_1,\ldots,\gamma_d \rangle$
modulo the two-sided ideal generated by 
$\,\gamma_i \gamma_j + \gamma_j \gamma_i - 2\eta_{ij}$ for $1 \leq i,j \leq d$.
Its associated graded algebra  is the exterior algebra $\wedge^* \CC^d$, so
${\rm dim}_\CC\, {\rm Cl}(1,d-1)  =  2^d$. A basis is given by
the square-free words $\gamma_{i_1} \gamma_{i_2} \cdots \gamma_{i_k}$
for $1 \leq i_1 < i_2 < \cdots < i_k \leq d$.

For applications in physics, one uses the representation of the Clifford algebra
${\rm Cl}(1,d-1)$ by {\em Dirac matrices} \cite[Section 4]{RdT}.
These matrices have size $2^k \times 2^k$, where
 $d=2k$ if $d$ is even and $d=2k+1$ if $d$ is odd. 
 We shall define the Dirac matrices 
  $\Gamma_1, \Gamma_2, \ldots, \Gamma_d$   recursively.
Starting with $d=2$ and $k=1$, we set
\begin{equation}
\label{eq:Omega1}     \Gamma_1 \, = \,
\begin{small}
    \begin{bmatrix}
        0 & 1 \\
        -1 & 0 \\
    \end{bmatrix} \end{small} \quad {\rm and} \quad
    \Gamma_2 \, = \, \begin{small}
    \begin{bmatrix}
        0 & 1 \\
        1 & 0
    \end{bmatrix}. \end{small}
\end{equation}

For $d =2k \geq 4$, we take tensor products
of smaller $\Gamma$ matrices with  Pauli matrices.
Namely, if $\Gamma_{k-1,i}$ is the $i$th Dirac matrix for $d=2k-2$
then we define
\begin{equation}
\label{eq:Omega2}
  \Gamma_{i} \,\,=\,\, \Gamma_{k-1,i} \,\otimes \,
\begin{small}        \begin{bmatrix}
            -1 \!\! &\!\!\! 0 \\
            0 \!\! &\!\!\! 1
        \end{bmatrix}
\end{small}        \,
        \,\,\,{\rm for}\,\,\, 1\leq i \leq d-2,\,
\end{equation}
\begin{equation}
\label{eq:Omega3}
                 \Gamma_{d-1} \,\,=\,\, {\rm Id}_{2^{k-1}} \,\otimes \,
      \begin{small}  \begin{bmatrix}
            0 \!\! &\!\! 1 \\
            1 \!\! &\!\! 0
        \end{bmatrix} \end{small} \,, \,\,\,
        \Gamma_{d} \,\,= \,\,{\rm Id}_{2^{k-1}} \,\otimes \,
\begin{small}        \begin{bmatrix}
            0 \!\! & \!\! \!\! -i \\
            i \!\! &\!\! 0
        \end{bmatrix}. \end{small}
\end{equation}
When $d = 2k+1$ is odd, we  construct the first $d - 1$ Dirac matrices as above,
 and we then add one additional Dirac matrix as follows:
\begin{equation}
\label{eq:Omega4}
    \Gamma_{d}\,\,=\,\,-i^{k-1} \cdot \Gamma_1 \Gamma_2 \,\cdots\, \Gamma_{d - 1}.
\end{equation}
One checks by induction on $k$ that this yields
a representation of ${\rm Cl}(1,d-1)$.

 \begin{proposition} \label{prop:clifford}
 The Dirac matrices satisfy the Clifford algebra relations, i.e.~we have
$\,{\Gamma_{\! 1}}^{\!\! 2} \,=\, -\,{\rm Id}_{2^k} $, 
$\,{\Gamma_{\! j}}^{\! 2} \,=\, \,{\rm Id}_{2^k}\,$ for $j \geq 2$, and
$\,\Gamma_i \Gamma_j + \Gamma_j \Gamma_i \,=\, {\rm 0}_{2^k}$ for $i \not= j$.
\end{proposition}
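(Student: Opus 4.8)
The plan is to prove all three families of relations at once by induction on $d$, following the recursive construction of the Dirac matrices. The base case $d=2$ is the direct check that the $2\times 2$ matrices in \eqref{eq:Omega1} satisfy $\Gamma_1^2=-{\rm Id}_2$, $\Gamma_2^2={\rm Id}_2$ and $\Gamma_1\Gamma_2=-\Gamma_2\Gamma_1$. The inductive step splits into the even case $d=2k\ge 4$, built from the Dirac matrices of dimension $d-2$, and the odd case $d=2k+1$, whose first $d-1$ matrices are exactly those of dimension $d-1=2k$; both auxiliary dimensions are smaller than $d$, so the induction is not circular.

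For the even case I would first record the elementary fact that the three $2\times 2$ blocks $\tau,\sigma,\rho$ occurring as rightmost tensor factors in \eqref{eq:Omega2}--\eqref{eq:Omega3} (with $\tau={\rm diag}(-1,1)$) each square to ${\rm Id}_2$ and pairwise anticommute. Every $\Gamma_i$ for $d=2k$ is a Kronecker product $A_i\otimes B_i$, where $A_i$ is either a Dirac matrix $\Gamma_{k-1,i}$ for dimension $d-2$ or ${\rm Id}_{2^{k-1}}$, and $B_i\in\{\tau,\sigma,\rho\}$. Using $(A\otimes B)(C\otimes D)=(AC)\otimes(BD)$, each relation to be proved among the $\Gamma_i$ factors as a tensor product of a relation among the $A_i$ — supplied by the inductive hypothesis, or trivial when an $A_i$ is the identity — and a relation among $\tau,\sigma,\rho$. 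Concretely: for $1\le i\neq j\le d-2$ one gets $\Gamma_i\Gamma_j+\Gamma_j\Gamma_i=(\Gamma_{k-1,i}\Gamma_{k-1,j}+\Gamma_{k-1,j}\Gamma_{k-1,i})\otimes\tau^2=0$ and $\Gamma_i^2=\Gamma_{k-1,i}^2\otimes{\rm Id}_2$ with the inherited sign; the relations involving $\Gamma_{d-1}={\rm Id}\otimes\sigma$ and $\Gamma_d={\rm Id}\otimes\rho$ reduce to $\sigma^2=\rho^2={\rm Id}_2$ and $\sigma\rho+\rho\sigma=0$; and the mixed anticommutators reduce to $\tau\sigma+\sigma\tau=0$ and $\tau\rho+\rho\tau=0$.

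For the odd case $d=2k+1$, set $M=\Gamma_1\Gamma_2\cdots\Gamma_{2k}$, so that $\Gamma_d=-i^{k-1}M$ by \eqref{eq:Omega4}. To show that $\Gamma_d$ anticommutes with $\Gamma_j$ for $1\le j\le 2k$, I would slide $\Gamma_j$ through $M$ from the right and from the left until it reaches the existing factor $\Gamma_j$: this costs $2k-j$ sign flips from the right and $j-1$ from the left, and after cancelling $\Gamma_j^2$ the two outcomes differ by $(-1)^{(2k-j)-(j-1)}=(-1)^{2k-2j+1}=-1$, so $M\Gamma_j=-\Gamma_j M$ and hence $\Gamma_d\Gamma_j=-\Gamma_j\Gamma_d$. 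To compute $\Gamma_d^2=(-i^{k-1})^2M^2$, I would reverse one copy of $M$: since the factors pairwise anticommute, reordering $\Gamma_1\cdots\Gamma_{2k}$ into $\Gamma_{2k}\cdots\Gamma_1$ introduces the sign $(-1)^{\binom{2k}{2}}$, and then $(\Gamma_1\cdots\Gamma_{2k})(\Gamma_{2k}\cdots\Gamma_1)$ collapses from the middle to $\Gamma_1^2\Gamma_2^2\cdots\Gamma_{2k}^2=-{\rm Id}_{2^k}$. Since $\binom{2k}{2}=k(2k-1)\equiv k\pmod 2$, this yields $M^2=(-1)^{k+1}{\rm Id}_{2^k}$, and combined with $(-i^{k-1})^2=(-1)^{k-1}$ we obtain $\Gamma_d^2=(-1)^{k-1}(-1)^{k+1}{\rm Id}_{2^k}={\rm Id}_{2^k}$ — which is exactly what the factor $-i^{k-1}$ in \eqref{eq:Omega4} is tuned to achieve.

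The individual computations are all routine; the part that needs genuine care is the sign bookkeeping in the odd case — the parities of $2k-2j+1$ in the anticommutation argument, of $\binom{2k}{2}$ in the reversal, and the verification that the prescribed power of $i$ produces $+{\rm Id}$ rather than $-{\rm Id}$ in $\Gamma_d^2$. It is also worth isolating at the outset the claim that $\tau,\sigma,\rho$ square to ${\rm Id}_2$ and pairwise anticommute, since the entire even case is built on it.
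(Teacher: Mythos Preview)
Your proof is correct and follows exactly the inductive strategy the paper itself indicates (``One checks by induction on $k$''), merely supplying the details the paper omits. The sign bookkeeping in the odd case is handled carefully and yields the right conclusion, so there is nothing to add.
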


In what follows we work with a variant of the matrices
above where the rows and columns have been permuted
to achieve a desirable block structure.
Namely, for $d = 2k$ even, each Dirac matrix
$\Gamma_i$ is anti-block diagonal of size $2^k \times 2^k$ with two blocks each of size 
$2^{k - 1} \times 2^{k-1}$. This corresponds to the fact that the representation 
of ${\rm Cl}(1,d-1)$ is reducible when $d$ is even. In this case, it splits into two Weyl representations, 
namely the left handed and right handed spinors \cite[Section 4.2.1]{RdT}. For $d = 2k + 1$ odd, the representation is irreducible, and we augment
our basis by the diagonal matrix
 $\Gamma_{d} = {\rm diag}(1, \dots, 1, - 1, \dots, -1)$. 

Consider a particle with momentum vector $p\in \CC^d$. We define its
   \textit{momentum space Dirac matrix}  to be the following linear combination of the Dirac matrices:
\begin{equation}
\label{eq:Pmatrix}
P \,\,=\,\, -p_1 \Gamma_1 + p_2 \Gamma_2  + p_3 \Gamma_3 + \cdots + p_{d}\Gamma_{d}.
\end{equation}
We next write $P$ explicitly. This illustrates the block structure mentioned above.

\begin{example}[$k = 1,2,3$]  \label{ex:Pmatrix}
    We take a look at the momentum space Dirac matrices $P = P^{(d)}$ for
    $d \leq 7$.
           When $d=2k$ is even, we get the anti-block diagonal matrices 
    $$
         P^{(2)} = \begin{bmatrix}
        0 & -p_1+p_2\\
        p_1+p_2 & 0
    \end{bmatrix}, \quad  P^{(4)} = 
    \begin{small} \begin{bmatrix}
        0 & 0 & p_1-p_2 & p_3-ip_4 \\
        0 & 0 & p_3+ip_4 & p_1+p_2\\
        -p_1-p_2 &  p_3-ip_4 & 0 & 0\\
        p_3+ip_4 & -p_1+p_2 & 0 & 0
    \end{bmatrix}, \end{small}
    $$
        $$
    P^{(6)} = \begin{footnotesize} \begin{bmatrix}
        0 & 0 & 0 & 0 &\! \!\!-p_1+p_2 & 0 & \!\!\! -p_3+ip_4 &\!\! p_5-ip_6\\
        0 & 0 & 0 & 0 & 0 & \!\! -p_1+p_2 & p_5+ip_6 & \!\!p_3+ip_4\\
        0 & 0 & 0 & 0 &\!\!\!\!\! -p_3-ip_4 & p_5-ip_6 &\!\!\! -p_1-p_2 & 0\\
        0 & 0 & 0 & 0 & p_5+ip_6 & p_3-ip_4 & 0 & \!\!\! \!\! -p_1-p_2\\
        p_1+p_2 & 0 & \!\!\!\!\! -p_3+ip_4 & p_5-ip_6 & 0 & 0 & 0 & 0\\
        0 & p_1+p_2 &\! p_5+ip_6 & p_3+ip_4 & 0 & 0 & 0 & 0\\ 
        -p_3-ip_4 & \!\!\! p_5-ip_6 & p_1-p_2 & 0 & 0 & 0 & 0 & 0\\ 
        p_5+ip_6 &\!\!\! p_3-ip_4 & 0 & p_1-p_2 & 0 & 0 & 0 & 0
    \end{bmatrix}. \end{footnotesize}
    $$
The matrix $P^{(d)}$ for $d = 2k + 1$ odd is obtained from the matrix $P^{(d - 1)}$ by adding 
the diagonal matrix $\,p_{d}\cdot {\rm diag}(1, \dots, 1, - 1, \dots, -1)$. For example, we have
    $$
    P^{(5)} \,=\, \begin{small} \begin{bmatrix}
        p_5 & 0 & p_1-p_2 & p_3-ip_4 \\
        0 & p_5 & p_3+ip_4 & p_1+p_2\\
        -p_1-p_2 &  p_3-ip_4 & -p_5 & 0\\
        p_3+ip_4 & -p_1+p_2 & 0 & -p_5
    \end{bmatrix}. \end{small}
    $$
\end{example}

We now return to arbitrary spacetime dimension $d \geq 2$.
It follows from Proposition \ref{prop:clifford} that the
 momentum space Dirac matrix $P$ satisfies the identity
\begin{equation}
\label{eq:P^2}
P^2 \,\,=\,\, (-p_1^2 + p_2^2 + \cdots + p_{d}^2)\,{\rm Id}_{2^k}.
\end{equation}
This implies the formula
$\,\det \,P\,=\, (p_1^2 - p_2^2 - \cdots - p_{d}^2)^{2^{k - 1}}$
for the determinant of~$P$.

\begin{corollary} \label{cor:rankP}
For massless particles in $d$ dimensions, the momentum space Dirac matrix  $P$ 
squares to $0$
and its rank equals half of its size, 
i.e.\ ${\rm rank} \,\,P = 2^{k - 1}$. 
\end{corollary}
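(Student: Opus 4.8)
The first assertion is immediate, and I would dispose of it in one line: specializing the identity~(\ref{eq:P^2}) along the masslessness constraint~(\ref{eq:massless}) gives $P^2 = 0$, so the image of $P$ is contained in the kernel of $P$; by rank-nullity, $\,{\rm rank}(P)\le 2^k-{\rm rank}(P)$, hence $\,{\rm rank}(P)\le 2^{k-1}$. The actual content of the statement is the matching lower bound: the rank of $P$ does not drop anywhere on the null cone.

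For that lower bound, the plan is to \emph{polarize}~(\ref{eq:P^2}). Write $P_p$ for the momentum space Dirac matrix attached to a momentum vector $p$. The Clifford relations of Proposition~\ref{prop:clifford}, equivalently $\Gamma_i\Gamma_j+\Gamma_j\Gamma_i = 2\eta_{ij}\,{\rm Id}_{2^k}$, give at once
$$ P_p\, P_q \,+\, P_q\, P_p \,\,=\,\, 2\,(p\cdot q)\,{\rm Id}_{2^k} \qquad \text{for all } p,q\in\CC^d . $$
Fix now a nonzero massless momentum $p$. Because the form $\cdot$ is non-degenerate, I may pick an auxiliary vector $q$ (not required to be null) with $c := p\cdot q\neq 0$, and set $E := \tfrac{1}{2c}\,P_p P_q$. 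A short computation using $P_p^2 = (p\cdot p)\,{\rm Id} = 0$ and the polarized identity (in the form $P_q P_p = 2c\,{\rm Id} - P_p P_q$) gives $(P_p P_q)^2 = 2c\,P_p P_q$, i.e.\ $E^2 = E$. An idempotent matrix has rank equal to its trace; and taking the trace of the polarized identity and using cyclicity gives $\,{\rm tr}(P_p P_q) = (p\cdot q)\,2^k$, so $\,{\rm tr}(E) = 2^{k-1}$. Since the image of $E$ is contained in the image of $P_p$, we get $\,{\rm rank}(P_p)\ge {\rm rank}(E) = 2^{k-1}$, and together with the upper bound this proves $\,{\rm rank}(P) = 2^{k-1}$.

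I do not expect a real obstacle here. The only point needing a word of care is the reading of ``for massless particles'': the argument needs $p\neq 0$, since $P$ vanishes when $p=0$; this is harmless, as a physical momentum is nonzero, and the construction of $E$ then works at \emph{every} nonzero null vector simultaneously, so no semicontinuity or genericity input is required. A tempting but less clean alternative would be to invoke lower semicontinuity of rank and check a single sample point; yet another route uses ${\rm Spin}(1,d-1,\CC)$-covariance of $P$ together with transitivity of the complex orthogonal group on nonzero null vectors, reducing to $p=(1,1,0,\dots,0)$ with $P = \Gamma_2-\Gamma_1$, but that must treat the two null orbits separately when $d=2$, whereas the polarization argument is uniform in $d\ge 2$.
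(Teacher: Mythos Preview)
Your argument is correct. The upper bound is immediate from~(\ref{eq:P^2}), and your polarization step is a clean and uniform way to obtain the lower bound: the idempotent $E=\tfrac{1}{2c}P_pP_q$ with ${\rm tr}(E)=2^{k-1}$ forces ${\rm rank}(P_p)\ge 2^{k-1}$ at every nonzero null $p$, with no genericity hypothesis needed.

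By contrast, the paper does not supply a proof of the corollary at all: it simply records~(\ref{eq:P^2}) and the determinant formula $\det P=(p\cdot p)^{2^{k-1}}$, and then states the corollary. Those two facts give $P^2=0$ and hence ${\rm rank}(P)\le 2^{k-1}$, but the equality ${\rm rank}(P)=2^{k-1}$ for \emph{every} nonzero null $p$ is left implicit. Your polarized anticommutator fills exactly this gap, and does so more cleanly than the semicontinuity or orbit-reduction alternatives you mention (the former, as you note, would only yield the generic rank on the null cone, not the rank at every point). In short, your write-up is more complete than the paper's treatment, and the idempotent trick is a genuinely informative addition.
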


The Dirac representation of the Clifford algebra ${\rm Cl}(1,d-1)$ gives rise
to the spin representation of the Lie algebra $\mathfrak{so}(1,d-1)$.
We consider the commutators
\begin{align}\label{lie algebra rep}
    \Sigma_{jk}\,\,=\,\,\frac{1}{4}[\Gamma_j,\Gamma_k].
\end{align}
These matrices $\Sigma_{jk}$ define a representation of $\mathfrak{so}(1,d-1)$ 
because they satisfy
\begin{align}
    [\Sigma_{ij},\Sigma_{kl}]\,\,=\,\,\eta_{jk}\Sigma_{il}+\eta_{il}\Sigma_{jk}-\eta_{jl}\Sigma_{ik}-\eta_{ik}\Sigma_{jl}.
\end{align}
Indeed, these are the commutation relations satisfied by the
matrices in the
standard basis of the Lie algebra $\mathfrak{so}(1,d-1)$.
One obtains the spin representation of the Lie group
${\rm SO}(1,d-1)$ on $\CC^{2^k}$ by taking
the matrix exponentials ${\rm exp}(\Sigma_{jk})$.

Another important player in our story is the \textit{charge conjugation matrix} $C$. 
This is an invertible $2^k \times 2^k$ matrix with entries in $\ZZ[i]$.
It represents an equivariant  linear map from the spinor representation of $\mathfrak{so}(1,d - 1)$
 to its dual representation.  
 The characteristic properties of the charge conjugation matrix $C$~are:
\begin{equation}
\label{eq:Cproperties}
CP = -P^TC \,\,\, \text{ if $d=2k$ is even}, \quad CP = (-1)^kP^TC  \,\,\, \text{ if $d=2k+1$ is odd}.
\end{equation}
We explicitly realize $C = C^{(d)}$ as a matrix
 whose nonzero entries are $1,-1,i,-i$.
 
\begin{example}[$k = 1,2,3$] \label{ex:Cmatrix}
    For $d = 2,3$ we take the same  skew symmetric matrix:
        $$
    C^{(2)} \,\,= \,\,C^{(3)}\,\, = \,\,\begin{bmatrix}
        0 & 1\\
        -1 & 0
    \end{bmatrix}.
    $$
    For $d = 4,5$ we obtain skew symmetric matrices with a block diagonal structure:
    $$
    C^{(4)} \,\,= \,\,\begin{small} \begin{bmatrix}
        0 & -i & 0 & 0\\
        i & 0 & 0 & 0\\
        0 & 0 & 0 & -i\\
        0 & 0 & i & 0
    \end{bmatrix}, \end{small} \quad C^{(5)} \,\,=\,\,
       \begin{small} \begin{bmatrix}
        0 & -i & 0 & 0\\
        i & 0 & 0 & 0\\
        0 & 0 & 0 & i\\
        0 & 0 & -i & 0
    \end{bmatrix}. \end{small}
    $$
    Finally, for $d = 6,7$ we have the same charge conjugation matrix:
      $$
    C^{(6)} \,\,= \,\,C^{(7)} \,\, =\,\,
    \begin{footnotesize} \begin{bmatrix}
        0 & 0 & 0 & 0 & 0 & 1 & 0 & 0\\
        0 & 0 & 0 & 0 & -1 & 0 & 0 & 0\\
        0 & 0 & 0 & 0 & 0 & 0 & 0 & -1\\
        0 & 0 & 0 & 0 & 0 & 0 & 1 & 0\\
        0 & -1 & 0 & 0 & 0 & 0 & 0 & 0\\
        1 & 0 & 0 & 0 & 0 & 0 & 0 & 0\\
        0 & 0 & 0 & 1 & 0 & 0 & 0 & 0\\
        0 & 0 & -1 & 0 & 0 & 0 & 0 & 0
    \end{bmatrix}. \end{footnotesize}
    $$ 
     This is a symmetric $8 \times 8$ matrix with an 
    anti-block diagonal structure.
\end{example}

The following result explains the matrix structures we saw
in Example~\ref{ex:Cmatrix}.

\begin{proposition} \label{prop:Cproperties}
    The following properties hold for the charge conjugation matrix $C$ associated with particles
    in spacetimes of dimension $d=2k$ and $d=2k+1$:
    \begin{enumerate}
        \item  $C$ is symmetric when $k \equiv 0,3 \!\! \mod{4}$; otherwise it is skew symmetric.
        \item $C$ is block diagonal when $k \equiv 0 \!\!\! \mod{2}$; otherwise it is anti-block diagonal.
        \item The $2^{k - 1} \times 2^{k - 1}$ blocks of $C$ are skew symmetric when $k \equiv 2,3 \!\! \mod{4}$; otherwise the blocks are symmetric.
    \end{enumerate}  
\end{proposition}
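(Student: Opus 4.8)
The plan is to prove (1)--(3) together by induction on $k$, building on the recursive construction \eqref{eq:Omega1}--\eqref{eq:Omega4} of the Dirac matrices. The first task is to pin down the matching recursion for the charge conjugation matrix in the permuted basis. I expect it to have Kronecker-product form
\[
C^{(2k)} \;=\; A_k \otimes C^{(2k-2)}
\]
(up to an overall scalar in $\{1,-1,i,-i\}$ and up to the sign change relating the even and odd cases), where $A_k$ is a fixed $2\times 2$ matrix whose shape depends only on the parity of $k$: diagonal and symmetric for $k$ even, anti-diagonal and skew for $k$ odd. The passage from $d=2k$ to $d=2k+1$ should be the identity when $k$ is odd and, when $k$ is even, the sign flip $\operatorname{diag}(B_1,B_2)\mapsto\operatorname{diag}(B_1,-B_2)$ on the block-diagonal matrix $C^{(2k)}$. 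To justify all this I would follow the transpose and block behaviour of each permuted Dirac matrix through \eqref{eq:Omega2}--\eqref{eq:Omega4} --- of the inner $2\times2$ factors appearing there, two are symmetric and one is skew --- and verify by induction that $A_k\otimes C^{(2k-2)}$ satisfies $C\Gamma_i=-\Gamma_i^{T}C$ for every $i$, which is the first relation in \eqref{eq:Cproperties}. The odd-dimensional relation then follows because the block sign flip is right multiplication by the chirality matrix $\Gamma_1\cdots\Gamma_{2k}\propto\operatorname{diag}(\mathrm{Id},-\mathrm{Id})$, which interchanges the two charge conjugation matrices in dimension $2k$, and because $\Gamma_d=\operatorname{diag}(\mathrm{Id},-\mathrm{Id})$ commutes with a block-diagonal matrix and anticommutes with an anti-block-diagonal one, producing exactly the sign $(-1)^k$.

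Granting the recursion, (1)--(3) become pure bookkeeping over the two steps. Let $\varepsilon_k\in\{0,1\}$ be the symmetry type of $C^{(2k)}$ (with $0=$ symmetric, $1=$ skew), so $\varepsilon_1=1$. Since $(M\otimes N)^{T}=M^{T}\otimes N^{T}$, tensoring with $A_k$ adds the symmetry parity of $A_k$, giving $\varepsilon_k\equiv\varepsilon_{k-1}+\delta_k\pmod 2$ with $\delta_k=1$ for $k$ odd and $\delta_k=0$ for $k$ even; this recursion is periodic with period $4$ and yields $\varepsilon_k=1$ exactly for $k\equiv1,2\pmod 4$, which is (1) for even $d$, while the odd step preserves symmetry type, so (1) holds for odd $d$ as well. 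For (2): reading $A_k\otimes C^{(2k-2)}$ as a $2\times2$ array of $2^{k-1}\times2^{k-1}$ blocks, the $(\alpha,\beta)$ block is $(A_k)_{\alpha\beta}\,C^{(2k-2)}$, so $C^{(2k)}$ is block diagonal precisely when $A_k$ is, i.e.\ when $k$ is even, and anti-block diagonal otherwise; the odd step does not change the pattern. For (3): the same description shows each nonzero $2^{k-1}\times2^{k-1}$ block of $C^{(2k)}$ is a nonzero scalar multiple of $C^{(2k-2)}$, hence skew exactly when $C^{(2k-2)}$ is skew, i.e.\ when $\varepsilon_{k-1}=1$, i.e.\ when $k\equiv2,3\pmod 4$; again the odd step changes nothing. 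The matrices $C^{(2)},\dots,C^{(7)}$ in Example~\ref{ex:Cmatrix} provide the base cases $k=1,2,3$.

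The main obstacle is the first step: writing down the recursive $C^{(d)}$ and proving that it intertwines the Dirac matrices with their transposes as demanded by \eqref{eq:Cproperties}. This is exactly where the passage to the permuted basis matters; it requires carefully propagating the transpose and block structure of $\Gamma_1,\dots,\Gamma_d$ through the recursion and checking compatibility with the extra diagonal generator \eqref{eq:Omega4} in odd dimensions. Once that is secured, all three structural statements drop out of the Kronecker-product form as sketched above.
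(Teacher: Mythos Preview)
Your approach is sound but genuinely different from the paper's. The paper does not set up a recursion at all: it writes down a closed-form product formula for $C$ in terms of the Dirac matrices (essentially $C=\Gamma_4\Gamma_6\cdots\Gamma_{2k}\Gamma_1$, with an extra chirality factor when $d\equiv 0\bmod 4$), and then reads off (1) by transposing this product and using the Clifford relations together with the individual symmetry types $\Gamma_1^T=-\Gamma_1$, $\Gamma_{2i}^T=-\Gamma_{2i}$, $\Gamma_{2i-1}^T=\Gamma_{2i-1}$ for $i\ge 2$; part (2) follows because every factor in the product is anti-block diagonal, so the block type is governed by the parity of the number of factors; part (3) is declared ``similar but a bit more technical'' and left to the reader. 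By contrast, your inductive Kronecker-product route handles all three parts on an equal footing once the recursion is pinned down, and in particular gives a clean argument for (3) where the paper is silent. One refinement you should make explicit: on the examples in the paper the correct recursion is $C^{(2k)}=A_k\otimes C^{(2k-1)}$ rather than $A_k\otimes C^{(2k-2)}$ (e.g.\ the $4\times 4$ blocks of $C^{(6)}$ are scalar multiples of $C^{(5)}$, not of $C^{(4)}$); your caveat about ``the sign change relating the even and odd cases'' absorbs this, and since $C^{(2k-1)}$ and $C^{(2k-2)}$ always share the same symmetry type your deductions for (1)--(3) are unaffected, but the statement ``each block is a scalar multiple of $C^{(2k-2)}$'' is literally false and should be adjusted. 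The obstacle you flag---verifying that $A_k\otimes C^{(2k-1)}$ really satisfies \eqref{eq:Cproperties} in the permuted basis---is real but routine once one tracks how the permutation turns the right Kronecker factor in \eqref{eq:Omega2}--\eqref{eq:Omega3} into the left one; alternatively, the paper's product formula for $C$ gives the recursion immediately, since appending $\Gamma_{2k}$ (and possibly $\Gamma_{2k+1}$) to the product for $C^{(2k-2)}$ contributes exactly a left $2\times 2$ Kronecker factor.
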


\begin{proof}
    The charge conjugation matrix can be written as follows:
\begin{equation}
\label{eq:CCM}
\begin{matrix}
    C & = & \Gamma_{d + 1} \Gamma_4 \Gamma_6 \,\cdots\,
    \Gamma_{d-2} \Gamma_{d} \Gamma_1 \,\,\,\,\, \text{ if $d \equiv 0\! \!\!\!\mod{4}$,}
    \\
    C & = & \!\!\!\!\!\! \Gamma_4  \Gamma_6 \,\cdots\, \Gamma_{2k-2} \Gamma_{2k} \Gamma_1 \,
    \quad \text{ otherwise.}
    \end{matrix}
\end{equation}
    Here, we set $\Gamma_{d + 1} = -i^{k - 1} \Gamma_1 \cdots \Gamma_d$,
    which is analogous to (\ref{eq:Omega4}).    
    One can check that this $C$ defines an equivariant map to the dual representation and 
    (\ref{eq:Cproperties}) holds.
    
The Dirac matrices are either symmetric or skew symmetric. Namely, we~have
\begin{equation}
\label{eq:eithersymmetric}
    \Gamma_1^T\,=\, -\Gamma_1, \quad \Gamma_2^T\, =\,\Gamma_2,  \quad {\rm and} \quad
     \Gamma_{2i - 1}^T \,= \, \Gamma_{2i - 1},\,\,\,\,
     \Gamma_{2i}^T = -\Gamma_{2i} \quad  {\rm for} \,\,\, i \,\geq \,2 .
\end{equation}
First suppose 
  $d \not\equiv 0 \!\! \mod{4}$. Then (\ref{eq:eithersymmetric}) and the Clifford algebra relations imply
        $$
    C^T \,\,=\,\, (-1)^k \Gamma_1 \Gamma_{2k} \Gamma_{2k-2}\, \cdots\,\Gamma_6 \Gamma_4 
    \,\,=\,\, (-1)^k(-1)^{\frac{k(k - 1)}{2}} C \,\,= (-1)^{\frac{(k + 1)k}{2}} C.
    $$
    We conclude that the matrix  $C$ is symmetric if $k \equiv 3 \!\! \mod{4}$
    and it is skew symmetric if $k \equiv 1,2 \!\! \mod{4}$.
          Now suppose that $d \equiv 0 \!\! \mod{4}$. In this case, we find
    $$
    C^T\,\, =\,\, (-1)^k \Gamma_1 \Gamma_{d} \Gamma_{d-2}\, \cdots\, \Gamma_6 \Gamma_4 \Gamma_{d + 1} \,\,
    = \,\,(-1)^k (-1)^{\frac{(k + 1)k}{2}} C \,\,=\,\, (-1)^{\frac{(k + 3)k}{2}} C.
    $$
    Hence $C$ is symmetric if $k \equiv 0 \!\! \!\mod{4}$; otherwise it is skew symmetric. 
    Bearing in mind that $d \in \{2k,2k+1\}$, this completes
    the proof of part 1 in Proposition~\ref{prop:Cproperties}.
    
    All  Dirac matrices $\Gamma_i$ are anti-block diagonal, except the last one when $d$ is odd. Since $C$ always has $k$ anti-block diagonal terms, we find that $C$ is block diagonal if $k \equiv 0 \!\! \mod{2}$. Otherwise it is 
    anti-block diagonal. This proves part~2.

The verification of  part 3 is similar, but it is a bit more technical.
\end{proof}

\section{Spinor Brackets}
\label{sec4}

We now return to our primary goal, namely to model interactions among
$n$ massless particles in $d$-dimensional spacetime. This is
based on the Dirac matrices in Section \ref{sec3}. Recall that the $i$th 
particle is the vector $p_i = (p_{i1},p_{i2},\ldots,p_{id})$.
As in Section \ref{sec2}, we assume that the tuple 
$(p_1,\ldots,p_n) \in \CC^{nd}$ lies in  the variety~$V(I_{d,n})$.
   
 The {\em momentum space Dirac matrix} for the $i$th particle is defined as
$$ P_i \,\,\,=\,\,\, -p_{i1}\Gamma_1 \,+\,   p_{i2} \Gamma_2\,+\,
p_{i3} \Gamma_3\,+\,\cdots\,+\,p_{id} \Gamma_d.
 $$
 This matrix has format $2^k \times 2^k$, where $k = \lfloor d/2 \rfloor$, 
  and its entries are linear forms in $p_i$.  The rank of $P_i$ equals $2^{k-1}$, 
  since the particle is massless, by Corollary \ref{cor:rankP}.
  The Clifford algebra relations
  imply the following anti-commutator identities:
\begin{equation}
\label{eq:cliff_to_s} 
P_i P_j \,+ \, P_j P_i \,\,\, = \,\,\, 2p_i \cdot p_j \, {\rm Id}_{2^k} \,\,=\,\,
2s_{ij} \,  {\rm Id}_{2^k}.
\end{equation}

 For each $i \in \{1,2,\ldots,n\}$, we now introduce $2^{k-1}$
additional variables $z_{ij}$.
These variables  parameterize the column space of $P_i$,
using the basis consisting of
the first $2^{k - 2}$ and last $2^{k - 2}$ columns of $P_i$.
 We fix the parameter vector
 \begin{equation}
    z_i \,\,=\,\, (\,
        z_{i1} ,\,z_{i2} ,\,\ldots \,, \,z_{i ,2^{k - 2}} \,,\,  0 ,\,0, \,\ldots \,,\, 0\, , \,z_{i ,2^{k - 2} + 1} 
        ,\,        z_{i,2^{k - 2} + 2},\,        \ldots \,, z_{i, 2^{k - 1}} \,)^T.
\end{equation}
Here it is assumed that $k \geq 2$.
In the small special case $k=1$ we set $z_i = (z_{i1},0)^T$.

We use Dirac's ket-notation for a general
vector in the column space of $P_i$:
\begin{equation}\label{eq:paramofcolumnspace}
    |\, i\, \rangle \,\,= \,\,P_i\,z_i.
\end{equation}
The bra-notation $\,\langle \, i \,|\,$
is used for the corresponding row vector $|\, i \, \rangle^T$.
Thus $|\, i\, \rangle \,$ and $\,\langle \, i \,|\, $
are vectors that depend on $d + 2^{k-1}$ parameters.
They represent  particle~$i$.

\begin{remark}
In this paper we restrict ourselves to massless particles.
If particle $i$ is massive then its momentum vector
$p_i $ satisfies the inhomogeneous equation
$$  \qquad p_{i1}^2 - p_{i2}^2 -\, \cdots\, - p_{id}^2 \,\,=\,\,  m_i^2  \qquad
\hbox{for some constant $m_i$.} $$
Hence the determinant of its matrix $P_i$
equals ${\rm det}\,\,P_i\, =\, m_i^{2^k}$.
The vector $|\, i\, \rangle \,$ is now an eigenvector of $P_i$, by equation (\ref{eq:P^2}).
The algebraic relations among the spinor brackets, defined  below,
 would involve the masses $m_1,\ldots,m_n$ as parameters.
The study of such affine varieties for
 massive particles is left for future work.
\end{remark}

We write $\CC[p,z]$  for the polynomial ring
in $nd$ variables $p_{ij}$ and $n2^{k - 1}$ variables $z_{ij}$.
We view $I_{d,n}$ as an ideal in $\CC[p,z]$.
The quantities that represent interactions among
$n$ massless particles are certain elements in the
quotient ring
$$ R_{d,n} \,\, = \,\, \CC[p,z]/I_{d,n} .$$
We know from Theorem \ref{thm:parameters}
that $R_{d,n}$ is an integral domain for $\max(d,n) \geq 4$.

The following elements of $R_{d,n}$  are invariant under the  action
of the Lorentz group ${\rm SO}(1,d-1)$.
We define the {\em spinor brackets} of order two and three to be

\begin{equation} \label{eq:23brackets}
    \langle \, i j\, \rangle \,=\, \langle \, i \, | \, C \, | \, j\, \rangle
    \quad {\rm and} \quad \langle \, ij \,k\, \rangle \,=\, \langle \, i \, | \, C P_j \, | \, k\, \rangle.
\end{equation}
Here $i,j,k \in \{1,2,\ldots,n\}$.
Similarly, we define the {\em $\ell$-th order spinor brackets}:
\begin{equation}\label{eq:lthorderbrackets}
    \langle \, i_1 i_2 \,\cdots\, i_\ell\, \rangle 
    \,\,=\,\,  \langle \, i_1 \, | \, C P_{i_2} \,\cdots \,P_{i_{\ell - 1}} \, | \, i_\ell\, \rangle.
\end{equation}
Here $C$ is the charge conjugation matrix from
Example \ref{ex:Cmatrix} and Proposition \ref{prop:Cproperties}.

\begin{example}[$d=3$] \label{ex:d3}
Explicitly, the spinor brackets of order two and three are
$$ \begin{matrix} \langle i j \rangle & = &  \begin{small}
\begin{bmatrix} z_{i1} \!\! &\! 0 \end{bmatrix}
\begin{bmatrix}
                       p_{i3}  &       p_{i1} + p_{i2} \\
                       -p_{i1} + p_{i2} &       -p_{i3} \end{bmatrix}
\begin{bmatrix} \,\,0 & 1 \\ -1 & 0 \end{bmatrix}
\begin{bmatrix}   p_{j3} &  -p_{j1} + p_{j2} \\
                            p_{j1} + p_{j2} &        -p_{j3} \end{bmatrix}
\begin{bmatrix} z_{j1} \\ 0 \end{bmatrix} \end{small} \smallskip
\\
 & = &
-p_{i1} p_{j3} z_{i1} z_{j1} - p_{i2} p_{j3} z_{i1} z_{j1}
 + p_{i3} p_{j1} z_{i1} z_{j1} + p_{i3} p_{j2} z_{i1} z_{j1}, 
 \medskip \\
\langle ijk \rangle & = &
p_{i1} p_{j1} p_{k1} z_{i1} z_{k1} + p_{i1} p_{j1} p_{k2} z_{i1} z_{k1}
- p_{i1} p_{j2} p_{k1} z_{i1} z_{k1} - p_{i1} p_{j2} p_{k2} z_{i1} z_{k1} \\
& & - p_{i1} p_{j3} p_{k3} z_{i1} z_{k1}
+ p_{i2} p_{j1} p_{k1} z_{i1} z_{k1} + p_{i2} p_{j1} p_{k2} z_{i1} z_{k1} 
- p_{i2} p_{j2} p_{k1} z_{i1} z_{k1} \\ & & 
- p_{i2} p_{j2} p_{k2} z_{i1} z_{k1} 
 - p_{i2} p_{j3} p_{k3} z_{i1} z_{k1} + p_{i3} p_{j1} p_{k3} z_{i1} z_{k1}
+ p_{i3} p_{j2} p_{k3} z_{i1} z_{k1} \\ & &  - p_{i3} p_{j3} p_{k1} z_{i1} z_{k1} - p_{i3} p_{j3} p_{k2} z_{i1} z_{k1}.
\end{matrix}
$$
These are elements of the quotient ring $R_{3,n}$, so the constraints
(\ref{eq:massless}) and (\ref{eq:momentumconservation}) hold.          \end{example}

\begin{remark} \label{rmk:bracketsinvariant}
The spinor brackets $\langle i j \rangle$
and $\langle i j k \rangle$ are invariant under the 
 action of ${\rm SO}(1,d-1)$ by
left multiplication on the $z$-vectors and
conjugation on the $P$-matrices. For instance,
given $\Sigma \in \,\mathfrak{so}(1,d-1)$, the matrix
$g = {\rm exp}(\Sigma)$ transforms
    \begin{equation}
    \label{ij bracket}
        \langle ij\rangle \,\,  = \,\, z_i^TP_i^TCP_jz_j
    \end{equation}
    as desired:
$\,        (gz_i)^T(gP_ig^{-1})^T C(gP_jg^{-1})gz_j
        =z_i^TP_i^Tg^TCgP_jz_j = \langle ij \rangle $.
Here $\,g^T C g = C\,$ holds because $\,C\,\Sigma=-\Sigma^TC\,$ implies
 $   \, C\,{\rm exp}(\Sigma)\,=\,{\rm exp}(-\Sigma^T)\,C\,=\,{\rm exp}(\Sigma^T)^{-1}C$.
\end{remark}

\begin{remark}
In what follows we consider only spinor brackets with  $\ell = 2,3$.
Their varieties are already quite intriguing, as we shall see in
Sections \ref{sec5} and \ref{sec6}.
We do not claim that the brackets for $\ell \geq 4$ can be reduced
to $\ell=2,3$. It would be worthwhile to study such reductions from the perspective
of invariant theory.
\end{remark}

We next examine symmetries of the spinor brackets. 
To this end, we define 
\begin{equation} \label{eq:spinormatrix}
    S \, \, = \, \, \bigl(\,\langle \, ij \, \rangle \,\bigr)_{1 \leq i,j \leq n} \, \, = \,\, \bigl(
        | \, 1 \, \rangle , | \, 2 \, \rangle ,   \ldots,       | \, n \,  \rangle
    \bigr)^T \cdot\, C \cdot \bigl(
      | \, 1 \, \rangle , | \, 2 \, \rangle ,   \ldots,       | \, n \,  \rangle \bigr).
\end{equation}
This is an $n \times n$ matrix, obtained by multiplying matrices
of formats $n \times 2^k$, $\,2^k \times 2^k$ and $2^k \times n$.
Similarly, for each index $j \in \{1,2,\ldots,n\}$ we define
the $n \times n$ matrix
\begin{equation} \label{eq:spinormatrix2}
    T_j \, \, = \, \, \bigl(\,\langle \, ij\,k \, \rangle\, \bigr)_{1 \leq i,k\leq n}\, \, = \,\, \bigl(
        | \, 1 \, \rangle , | \, 2 \, \rangle ,   \ldots,       | \, n \,  \rangle
    \bigr)^T \cdot \,C \cdot P_j  \cdot \bigl(
      | \, 1 \, \rangle , | \, 2 \, \rangle ,   \ldots,       | \, n \,  \rangle \bigr).
\end{equation}

\begin{theorem}\label{thm:basicspinprops}
    The $n \times n$ matrix $S$ has rank $\le 2^{k}$ with zeros on the diagonal.
     If $k \equiv 0,3 \!\!\mod{4}$ then $S$ is symmetric;
       otherwise $S$ is skew symmetric. In symbols, 
    $$ \qquad
    \langle \, i\,i\,\rangle\, =\, 0 \quad {\rm and} \quad
        \langle \,ij\, \rangle \,=\, \pm\, \langle \,j\,i \, \rangle
        \qquad \hbox{for}\,\, \,\, 1 \leq i < j \leq n.
$$
The $n \times n$ matrix
    $T_j$ has rank $\le 2^{k  - 1}$ with zeros in the $j$th row
    and column.
        If $d \equiv 1,2,3,4 \!\! \mod 8$ then $T_j$ is symmetric;
        otherwise $T_j$ is skew symmetric. Thus,
            $$ \qquad
    \langle\, jj\,k\, \rangle \,=\, \langle\, ijj\, \rangle \,=\, 0 \quad {\rm and} \quad
    \langle \, ij\,k \, \rangle \,=\, \pm \,\langle \, kj\,i \, \rangle
 \qquad \hbox{for} \,\,\,\, 1 \leq i,j,k \leq n.    
    $$
    The sum of the $n$ matrices $T_j$ is the zero matrix, i.e.~$T_1 + T_2 + \cdots + T_n = 0$.
    \end{theorem}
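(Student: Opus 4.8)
The plan is to prove each assertion about $S$ and the $T_j$ by tracing the defining matrix products in \eqref{eq:spinormatrix} and \eqref{eq:spinormatrix2} and invoking the structural facts already established: the rank of each $P_i$ (Corollary \ref{cor:rankP}), the anti-commutator identity \eqref{eq:cliff_to_s}, the symmetry of $C$ (Proposition \ref{prop:Cproperties}(1)), and the covariance property $CP_j = \pm P_j^T C$ from \eqref{eq:Cproperties}. Write $Z$ for the $2^k \times n$ matrix whose $i$th column is $|\,i\,\rangle = P_i z_i$, so that $S = Z^T C Z$ and $T_j = Z^T C P_j Z$.

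First I would dispatch the rank bounds. Since $S = Z^T C Z$ factors through the $2^k$-dimensional spinor space, $\operatorname{rank} S \le 2^k$ is immediate; likewise $T_j = Z^T (C P_j) Z$ with $C P_j$ of rank $\le \operatorname{rank} P_j = 2^{k-1}$ (Corollary \ref{cor:rankP}), giving $\operatorname{rank} T_j \le 2^{k-1}$. Next the vanishing entries. For $\langle i\,i\rangle = z_i^T P_i^T C P_i z_i$: transposing the scalar and using $C^T = \pm C$ together with $C P_i = \pm P_i^T C$, one finds $\langle i\,i\rangle = \pm \langle i\,i\rangle$ forces nothing by itself, so instead I would argue directly that $P_i^T C P_i$ is (skew-)symmetric of a parity opposite to that of $z_i$'s contribution — more cleanly, use \eqref{eq:cliff_to_s} with $i=j$, which gives $P_i^2 = s_{ii}\,\mathrm{Id} = 0$ on $V(I_{d,n})$, hence $P_i^T C P_i = \pm C P_i P_i \cdot(\text{sign}) = 0$ after pushing $C$ through one factor; so the diagonal of $S$ vanishes. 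The same computation with $P_i^T C P_j P_i$ in the $(i,\cdot)$ entries of $T_i$, combined with $P_i P_j + P_j P_i = 2 s_{ij}\mathrm{Id}$ and $P_i^2 = 0$, shows $P_i^T C P_i P_i = 0$ and hence $\langle i\,i\,k\rangle = \langle i\,j\,i\rangle = 0$; I should be a little careful reorganizing factors using the anticommutator so that an $P_i^2$ is actually exposed.

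For the symmetry type of $S$: transpose $S = Z^T C Z$ to get $S^T = Z^T C^T Z = (\pm 1) Z^T C Z = \pm S$, with the sign being exactly the symmetry sign of $C$, which by Proposition \ref{prop:Cproperties}(1) is $+$ iff $k \equiv 0,3 \pmod 4$. For $T_j$: $T_j^T = Z^T P_j^T C^T Z = Z^T P_j^T C Z \cdot(\pm 1)$, and now I rewrite $P_j^T C$ using \eqref{eq:Cproperties}. When $d = 2k$ is even, $C P_j = - P_j^T C$, so $P_j^T C = - C P_j$; when $d = 2k+1$, $P_j^T C = (-1)^k C P_j$. Combining these two signs — the symmetry sign $(-1)^{k(k+1)/2}$ of $C$ and the covariance sign — and reducing modulo $8$ in $d$ should produce exactly the stated dichotomy ($T_j$ symmetric iff $d \equiv 1,2,3,4 \pmod 8$); this bookkeeping, splitting into even/odd $d$ and tracking $k \bmod 4$, is the one genuinely fiddly part, and I would organize it as a short table of the four residues $k \bmod 4$ against $d \in \{2k, 2k+1\}$ to make the eight cases transparent.

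Finally, $\sum_{j=1}^n T_j = Z^T C \bigl(\sum_{j=1}^n P_j\bigr) Z$, and $\sum_{j=1}^n P_j = \sum_j \sum_m (\text{coeff}) \Gamma_m = \sum_m \bigl(\sum_j \pm p_{jm}\bigr)\Gamma_m$; momentum conservation \eqref{eq:momentumconservation} makes each inner sum vanish in $\CC[p]/I_{d,n}$, so $\sum_j P_j = 0$ and hence $\sum_j T_j = 0$. The main obstacle is purely the sign bookkeeping in the $T_j$ symmetry claim; everything else is a direct consequence of the factorization $S = Z^T C Z$, $T_j = Z^T C P_j Z$ together with the already-proved properties of $C$, $P_i$, and the Clifford relations.
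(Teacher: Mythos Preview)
Your approach is essentially the paper's: rank bounds from the factorizations \eqref{eq:spinormatrix}--\eqref{eq:spinormatrix2} and Corollary~\ref{cor:rankP}, vanishing entries from $P_i^2 = 0$, (skew-)symmetry from Proposition~\ref{prop:Cproperties}(1) together with \eqref{eq:Cproperties}, and $\sum_j T_j = 0$ from momentum conservation.

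One slip to correct. In the paragraph on the $j$th row and column of $T_j$ you write ``$\langle i\,i\,k\rangle = \langle i\,j\,i\rangle = 0$'', but $\langle i\,j\,i\rangle$ is the $(i,i)$ diagonal entry of $T_j$, which is \emph{not} asserted to vanish and in general does not (for $d=3$ the diagonal entries $\langle 212\rangle,\langle 313\rangle,\langle 414\rangle$ of $T_1$ in Example~\ref{ex:flatlanders} are nonzero). The required identities are $\langle i\,j\,j\rangle = 0$ and $\langle j\,j\,k\rangle = 0$, and these follow in one line from $P_j\,|j\rangle = P_j^2 z_j = 0$: indeed $\langle i\,j\,j\rangle = \langle i|\,C P_j\,|j\rangle = 0$ directly, and $\langle j\,j\,k\rangle = z_j^T P_j^T C P_j\,|k\rangle = \pm z_j^T C P_j^2\,|k\rangle = 0$ after one use of \eqref{eq:Cproperties}. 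No anticommutator \eqref{eq:cliff_to_s} is needed here, and the expression ``$P_i^T C P_i P_i$'' you wrote (with three copies of $P_i$) does not match any bracket in play. The paper's proof records exactly this one-line argument.
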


\begin{proof}
    Since $C$ is of rank $2^{k}$, equation (\ref{eq:spinormatrix})
    implies that $S$ has rank at most $2^k$.
    In light of $P_i^2 = 0$, the vector $|\, i \, \rangle$ lies in the kernel of $P_i$,
     so $\langle \, i\, i \, \rangle = 
     \pm z_i^TCP_i | \, i \,\rangle = 0$.
    The (skew) symmetry property of $S$ follows from
    part (1) in Proposition \ref{prop:Cproperties}.

We now turn to $T_j$.
    By Corollary \ref{cor:rankP},
    the $2^k \times 2^k$ matrix
     $P_j$ has rank $2^{k - 1}$.
      It follows from (\ref{eq:spinormatrix2}) that $T_j$ has rank at most $2^{k - 1}$.
    Since $|\, j \, \rangle$ lies in the kernel of $P_j$, we conclude
    $\langle \, ijj \, \rangle = \langle \, i \, |\,C P_j\,| \, j\, \rangle = 0$,
    and similar for $\langle \, jjk \, \rangle$. The (skew) symmetry property of $T_j$ follows from Proposition \ref{prop:Cproperties} and equation~(\ref{eq:Cproperties}).
Finally,   momentum conservation $\sum_{j=1}^n P_j = 0$
implies the matrix identity $\sum_{j=1}^n T_j = 0$.
\end{proof}

We regard the tuple $T = (T_1,\ldots,T_n)$
as a tensor of format $n \times n \times n$.
By augmenting this with the matrix $S$,
we obtain a tensor $ST$ of format $n \times (n + 1) \times n$.

\begin{remark}
For the $\ell$-th order spinor brackets we can introduce the
   tensor $T^{(\ell)} = (\langle \, i_1 \cdots i_\ell \, \rangle)_{i_1, \dots, i_\ell}$ of size $n^\ell$. 
   In general, the following symmetries will hold: 
$$
\langle \, i_1 i_1 i_3 \,\cdots\, i_{\ell}\, \rangle \,=\, \langle\, i_1 \,\cdots\, i_j i_j\, \cdots\, i_{\ell}\, \rangle
\, =\, \langle\, i_1 \cdots i_{\ell - 2} i_{\ell} i_{\ell}\, \rangle \,=\, 0
\quad {\rm and} \quad \langle \, i_1\, \cdots \,i_\ell \, \rangle \,=\, \pm\langle \, i_\ell\, \cdots \,i_1 \, \rangle.
$$
In the latter equation,  the sign is positive when $\ell$ is even
and $d \equiv 0,1,6,7 \!\!\! \mod{8}$, or when  $\ell$ is odd
and $d \equiv 1,2,3,4 \!\!\! \mod{8}$. Otherwise we get a negative sign.
\end{remark}

The next two sections are concerned with
polynomial relations satisfied by the spinor brackets.
These relations define the kinematic varieties for massless particles
which are promised in our title.
We offer a preview for the case in Example~\ref{ex:d3}.
Our computations were performed with
the software  {\tt Macaulay2}~\cite{M2}.

\begin{example}[$d=3,k=1,n=4$] \label{ex:flatlanders}
We consider four particles in spacetime for flatlanders.
The six spinor brackets of order two form a skew symmetric matrix:
$$
S \,\,=\,\,  \begin{small} \begin{bmatrix}
0  & \langle 12 \rangle & \langle 13 \rangle & \langle 14 \rangle \\
 -\langle 12 \rangle & 0  & \langle 23 \rangle & \langle 24 \rangle \\
 -\langle 13 \rangle & -\langle 23 \rangle & 0  & \langle 34 \rangle \\
 -\langle 14 \rangle & -\langle 24 \rangle & -\langle 34 \rangle & 0  \\
\end{bmatrix}. \end{small}
$$
The $24$ spinor brackets of order three are the entries of 
four symmetric matrices:
$$
T_1 \,\,=\,\, \begin{small}
\begin{bmatrix}
0  & 0  & 0  & 0  \\
 0  & \langle 212 \rangle & \langle 213 \rangle & \langle 214 \rangle \\
 0  & \langle 213 \rangle & \langle 313 \rangle & \langle 314 \rangle \\
 0  & \langle 214 \rangle & \langle 314 \rangle & \langle 414 \rangle \\
\end{bmatrix} \end{small} \, , \qquad
T_2 \,\,=\,\,  \begin{small}
\begin{bmatrix}
 \langle 121 \rangle & 0  & \langle 123 \rangle & \langle 124 \rangle \\
 0  & 0  & 0  & 0  \\
 \langle 123 \rangle & 0  & \langle 323 \rangle & \langle 324 \rangle \\
 \langle 124 \rangle & 0  & \langle 324 \rangle & \langle 424 \rangle \\
\end{bmatrix}, \end{small}
$$
$$ T_3 \,\,=\,\,  \begin{small}
\begin{bmatrix} 
\langle 131 \rangle & \langle 132 \rangle & 0  & \langle 134 \rangle \\
 \langle 132 \rangle & \langle 232 \rangle & 0  & \langle 234 \rangle \\
 0  & 0  & 0  & 0  \\
 \langle 134 \rangle & \langle 234 \rangle & 0  & \langle 434 \rangle \\
\end{bmatrix} \end{small} \, , \qquad
T_4 \,\,=\,\, 
\begin{small}
\begin{bmatrix}
 \langle 141 \rangle & \langle 142 \rangle & \langle 143 \rangle & 0  \\
 \langle 142 \rangle & \langle 242 \rangle & \langle 243 \rangle & 0  \\
 \langle 143 \rangle & \langle 243 \rangle & \langle 343 \rangle & 0  \\
 0  & 0  & 0  & 0  \\
\end{bmatrix}. \end{small}
$$
We are interested in the subvariety of $\PP^5 \times \PP^{23}$
parametrized by the $30$ brackets.
This  {\em kinematic variety} is irreducible of dimension $4$ and its {\em multidegree} equals
\begin{equation}
\label{eq:multidegree34}
5 s^5 t^{19} \,+ \, 28 s^4 t^{20} \,+\,
24 s^3 t^{21}\,+\, 10 s^2 t^{22}\,+\, 2 s t^{23}
\,\,\in\,\,
H^*(\PP^5 \times \PP^{23}, \,\ZZ).
\end{equation}
The prime ideal of our variety is minimally generated
by the $10$ linear forms in
\begin{equation}
\label{eq:relations34a}
 T_1 \,+\, T_2 \,+\, T_3 \,+\, T_4 \,\,=\,\, 0,
 \end{equation}
together with $54 = 1 + 24 + 29$ quadrics.
First, there is the Pl\"ucker quadric
\begin{equation}
\label{eq:relations34b}
\langle 12 \rangle \langle 34 \rangle \,-
\langle 13 \rangle \langle 24 \rangle \,+
\langle 14 \rangle \langle 23\rangle \quad = \quad
{\rm Pfaffian}(S), 
\end{equation}
which ensures that $S$ has rank two.
Next, we have $24$ binomial quadrics like
\begin{equation}
\label{eq:relations34c}
 \langle ij\,k \rangle \langle l j \,m \rangle \, - \,
\langle ij\,m \rangle \langle l j \,k \rangle.
\end{equation}
For each $j$, we have six such binomials. They are the $2 \times 2$ minors
 which ensure that the matrix $T_j$ has rank $\leq 1$.
Finally, we have $29$ bilinear relations, such as
\begin{equation}
\label{eq:flatrel} \langle 12 \rangle \langle 324 \rangle - \langle 34 \rangle \langle 142 \rangle
\quad {\rm and} \quad
  \langle 12 \rangle  \langle 243 \rangle
 - \langle 13 \rangle \langle 242 \rangle 
 + \langle 23 \rangle \langle 142 \rangle. 
 \end{equation}
These ensure that the 
$4 \times 20$ matrix $(S,T_1,T_2,T_3,T_4)$ has rank $\leq 2$.
We obtain (\ref{eq:flatrel}) from the
$3 \times 3$ minors of this flattening
of the $4 \times 5 \times 4$ tensor $ST$.
Our prime ideal is generated 
by (\ref{eq:relations34a}),
 (\ref{eq:relations34b}),
 (\ref{eq:relations34c}) and quadrics like (\ref{eq:flatrel}).
 See also Conjecture~\ref{conj:d=3}.
 \end{example}

\section{Varieties in Matrix Spaces}
\label{sec5}

The order two spinor brackets $\langle i j \rangle$ are the $\binom{n}{2}$  entries
of an $n \times n$ matrix $S$ which is either symmetric or skew symmetric.
The {\em kinematic variety} $\mathcal{K}^{(2)}_{d,n}$
consists of such matrices $S$.
This is an irreducible variety in  $\PP^{\binom{n}{2}-1}$.
We seek its prime~ideal.

\begin{theorem}
\label{thm:ordertwosmall}
For $d=3$, the ideal of $\mathcal{K}^{(2)}_{3,n}$ 
is generated by the $4 \times 4$ Pfaffians of
a skew symmetric $n \times n$ matrix, so this kinematic
variety is the Grassmannian ${\rm Gr}(2,n)$.
For $d=4,5$, the ideal is generated by the $6 \times 6$ Pfaffians
of a skew symmetric $n \times n$ matrix, and hence
   $\mathcal{K}^{(2)}_{4,n}=\mathcal{K}^{(2)}_{5,n}$   is the secant variety of ${\rm Gr}(2,n)$.
\end{theorem}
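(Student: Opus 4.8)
The plan is to identify each kinematic variety $\mathcal{K}^{(2)}_{d,n}$ with a Pfaffian (determinantal) variety of skew symmetric matrices. In all three cases $k=\lfloor d/2\rfloor$ equals $1$ (when $d=3$) or $2$ (when $d=4,5$), so by Theorem~\ref{thm:basicspinprops} the matrix $S=(\langle ij\rangle)$ is skew symmetric of rank $\le 2^k$; in particular $\mathcal{K}^{(2)}_{4,n}=\mathcal{K}^{(2)}_{5,n}$, since both come from $k=2$. Writing $V=\bigl(\,|1\rangle,\ldots,|n\rangle\,\bigr)$ for the $2^k\times n$ matrix with columns $|i\rangle=P_iz_i$, equation~\eqref{eq:spinormatrix} reads $S=V^{T}CV$. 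Let $Y_{d,n}\subseteq\PP^{\binom{n}{2}-1}$ be the variety of skew symmetric $n\times n$ matrices of rank $\le 2^k$; it is irreducible, and it is classical that its prime ideal is generated by the $(2^k+2)\times(2^k+2)$ Pfaffians of the generic skew matrix, namely the Plücker ideal of $\mathrm{Gr}(2,n)$ when $2^k=2$, and the ideal of the first secant variety $\sigma_2(\mathrm{Gr}(2,n))$ when $2^k=4$. Theorem~\ref{thm:basicspinprops} gives the inclusion $\mathcal{K}^{(2)}_{d,n}\subseteq Y_{d,n}$. Since $\mathcal{K}^{(2)}_{d,n}$ is irreducible — being the closure of the image of the irreducible variety $V(I_{d,n})\times\mathbb{A}^{n2^{k-1}}$ under the bracket map — it suffices to prove that this bracket map is dominant onto $Y_{d,n}$, and then the equality of the two prime ideals follows.

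For the dominance I would factor the parametrization as $(p,z)\mapsto V\mapsto V^{T}CV$. The map $V\mapsto V^{T}CV$ is already surjective onto $Y_{d,n}$: a skew form of rank $2r\le 2^k$ on $\mathbb{C}^n$ can be written $U^{T}\omega U$ with $U$ a $2r\times n$ matrix of rank $2r$ and $\omega$ the standard symplectic form on $\mathbb{C}^{2r}$; padding $U$ with zero rows and $\omega$ with a complementary symplectic block realizes it as $V^{T}C'V$ for a nondegenerate skew form $C'$ on $\mathbb{C}^{2^k}$, and since all such $C'$ are $\mathrm{GL}_{2^k}$-equivalent to $C$ one absorbs the base change into $V$. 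Hence it is enough to show that the set $D$ of matrices $V$ arising from massless, momentum-conserving $(p,z)$ is Zariski dense in the space of all $2^k\times n$ matrices, because then the closure of $(V\mapsto V^{T}CV)(D)$ is all of $Y_{d,n}$. Here I would use the block form of $P_i$ from Example~\ref{ex:Pmatrix}: for $d=3$ the column space $\mathrm{col}(P_i)\subseteq\mathbb{C}^2$ is a line sweeping out all of $\PP^1$ as $p_i$ runs over the massless conic; for $d=4$ masslessness forces the off-diagonal blocks of $P_i$ to have rank one, so $\mathrm{col}(P_i)$ is a product plane $L\oplus L'$ with $L,L'$ arbitrary lines in the two summands of $\mathbb{C}^2\oplus\mathbb{C}^2$; for $d=5$ masslessness makes the corner block invertible and $\mathrm{col}(P_i)$ is the graph of an arbitrary $M\in\mathrm{SL}_2(\mathbb{C})$. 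In every case, for generic $p_i$ the coordinate subspace carrying the $z$-variables meets $\ker P_i$ trivially, so $|i\rangle=P_iz_i$ can be made equal to any prescribed $\mu_i\in\mathbb{C}^{2^k}$ with nonzero block-coordinates — a Zariski dense condition on $\mu_i$.

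The remaining point — and the one I expect to be the main obstacle — is that momentum conservation $\sum_i p_i=0$ must not cut this image down, and it couples all $n$ particles so cannot be handled particle by particle. Having prescribed the columns $\mu_i$ of $V$ generically, the analysis above determines each $p_i$ up to a nonzero scalar $c_i$ (the extra one-parameter freedom inside $\mathrm{SL}_2$ for $d=5$ only enlarges the available choices), so momentum conservation becomes a linear system $\sum_i c_i q_i=0$ in the unknowns $c_i$, where $q_i$ is a fixed massless vector depending on $\mu_i$. For $n$ large and the $\mu_i$ generic, the $d\times n$ matrix with columns $q_1,\ldots,q_n$ has rank $d$, and its $(n-d)$-dimensional kernel is contained in no coordinate hyperplane, hence contains a vector $(c_1,\ldots,c_n)$ with all entries nonzero; one then solves $P_iz_i=\mu_i$ for the $z_i$. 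This gives dominance, whence $\mathcal{K}^{(2)}_{d,n}=Y_{d,n}$, and the asserted Pfaffian generators and the identifications with $\mathrm{Gr}(2,n)$ and its first secant variety follow. The finitely many small values of $n$, where $Y_{d,n}$ degenerates to a linear space or the kernel argument fails, would be checked by a direct computation. The whole scheme ultimately rests on the elementary fact that a generic positive-dimensional linear subspace of $\mathbb{C}^n$ lies in no coordinate hyperplane.
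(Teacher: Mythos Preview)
Your approach is correct and shares the paper's two-step outline: containment $\mathcal{K}^{(2)}_{d,n}\subseteq Y_{d,n}$ via Theorem~\ref{thm:basicspinprops}, then equality by showing the bracket map is dominant onto $Y_{d,n}$. The difference lies entirely in how dominance is argued. The paper dispenses with it in one sentence: a {\tt Macaulay2} check that the $2^k$ entries of the single vector $|\,i\,\rangle$ are algebraically independent (this is precisely what fails for $d\ge 6$), from which it concludes that $S=V^{T}CV$ is a generic skew matrix of rank $\le 2^k$. You instead give a hands-on geometric argument, analyzing $\mathrm{col}(P_i)$ case by case in $d$ and then absorbing momentum conservation via the scaling trick $p_i=c_i q_i$ and a kernel argument. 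Your route is longer but makes explicit a point the paper leaves implicit --- why algebraic independence of one column $|\,i\,\rangle$ suffices for genericity of the full matrix $V$ when $\sum_i p_i=0$ couples the $n$ particles --- at the cost of the case split in $d$ and deferring small $n$ to direct computation. The paper's route is shorter but leans on machine verification.
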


\begin{proof}
For $d = 3,4,5$ we have $k \equiv1,2 \!\! \mod{4}$.
 Theorem \ref{thm:basicspinprops} tells us that
$S$ is skew symmetric of rank $\leq 2^k$.
Hence $\mathcal{K}^{(2)}_{d,n}$ is contained in
${\rm Gr}(2,n)$ and its secant variety respectively.
To see that they are equal, we checked (using {\tt Macaulay2}) that 
the entries of the column vector $\,|\, i \rangle\,$ are algebraically independent.
Thus~(\ref{eq:spinormatrix})~is a generic
skew symmetric $n \times n$ matrix of rank $\leq 2^k$.
This might fail for $d \geq 6$.
\end{proof}

\begin{remark}
The dimensions of Grassmannians and their secant varieties
are well known; see e.g.~\cite{CGG}. For the kinematic varieties
in Theorem \ref{thm:ordertwosmall}, we have
\begin{equation}\label{eq:grassdim} {\rm dim}\, \mathcal{K}^{(2)}_{3,n} \,=\, 2n-4
\quad {\rm and} \quad 
{\rm dim} \, \mathcal{K}^{(2)}_{4,n} \,=\,{\rm dim} \, \mathcal{K}^{(2)}_{5,n} \,=\,
 4n-11 \qquad {\rm for} \,\,\, n \geq 4.
\end{equation}
 \end{remark}

Next we consider spacetime dimensions $d = 6,7,8,9$.
Here $k = 3,4$, so $S$ is a symmetric $n \times n$ matrix.
It still has zeros on the diagonal and ${\rm rank} \,S \leq 2^k$,
by Theorem \ref{thm:basicspinprops}.
The variety of such matrices is irreducible, and its prime ideal
is generated by the $(2^k+1) \times (2^k+1)$ minors.
This is proved for $4 \times 4$ minors in \cite[Theorem 3.5]{DFRS},
but the proof is the same for matrix of fixed size larger than $4$.

\begin{conjecture} \label{conj:K2}
For $d \equiv 0,1,6,7 \!\!\mod{8}$,
the kinematic variety $\mathcal{K}^{(2)}_{d,n}$
consists of all symmetric $n \times n$ matrices
with zero diagonal and rank $\leq 2^{\lfloor d/2 \rfloor}$. 
For $d \equiv 2,3,4,5 \!\!\mod{8}$,
$\,\mathcal{K}^{(2)}_{d,n}$ is the variety of
 skew symmetric $n \times n$ matrices
of rank $\leq 2^{\lfloor d/2 \rfloor}$.
\end{conjecture}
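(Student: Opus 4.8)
The plan is to deduce Conjecture \ref{conj:K2} from the fact that the inclusion in Theorem \ref{thm:basicspinprops} is an equality of irreducible varieties, which in turn reduces to a dimension count. Write $\mathcal D=\mathcal D_{d,n}$ for the determinantal variety on the right-hand side: when $d\equiv 0,1,6,7\!\!\pmod 8$ it is the variety of symmetric $n\times n$ matrices with zero diagonal and rank $\le 2^k$, and when $d\equiv 2,3,4,5\!\!\pmod 8$ it is the variety of skew-symmetric $n\times n$ matrices of rank $\le 2^k$, where $k=\lfloor d/2\rfloor$. As recalled in the discussion preceding the Conjecture, $\mathcal D$ is irreducible, with prime ideal generated by the $(2^k+1)\times(2^k+1)$ minors (resp.\ the $(2^k+2)\times(2^k+2)$ Pfaffians) and of known dimension \cite{DFRS}. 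Theorem \ref{thm:basicspinprops} gives the inclusion $\mathcal K^{(2)}_{d,n}\subseteq\mathcal D$, and $\mathcal K^{(2)}_{d,n}$ is irreducible because it is the closure of the image of the irreducible variety $V(I_{d,n})\times\CC^{\,n2^{k-1}}$ under the morphism $\phi\colon(p,z)\mapsto S=M^TCM$, where $M=(\,|1\rangle,\dots,|n\rangle\,)$ and $|i\rangle=P_iz_i$. Hence the Conjecture is equivalent to the statement that $\phi$ is dominant onto $\mathcal D$, i.e.\ that $\dim\mathcal K^{(2)}_{d,n}=\dim\mathcal D$.

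The first step is to prove dominance by a tangent-space computation at a single well-chosen point. One chooses null momenta $p^0_i$ for which the matrices $P^0_i$ have a simple rank-$2^{k-1}$ normal form, together with generic parameter vectors $z^0_i$, so that $S_0=\phi(p^0,z^0)$ has maximal possible rank and lies in the smooth locus of $\mathcal D$, where $\dim T_{S_0}\mathcal D=\dim\mathcal D$. The differential of $\phi$ at $(p^0,z^0)$ sends $(\dot p,\dot z)$ to $\dot M^TCM^0+(M^0)^TC\dot M$, where $\dot M$ has $i$th column $P^0_i\dot z_i+\dot P_iz^0_i$ and $\dot P_i$ ranges over the linearizations of \eqref{eq:Pmatrix} subject to the linearized massless equation $p^0_i\cdot\dot p_i=0$ and momentum conservation $\sum_i\dot p_i=0$. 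It suffices to show that this linear map surjects onto $T_{S_0}\mathcal D$; by lower semicontinuity of the rank of the Jacobian, rank $\dim\mathcal D$ at one point forces $\dim\mathcal K^{(2)}_{d,n}\ge\dim\mathcal D$, and the Conjecture follows for all $n$ simultaneously.

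This reduces the problem to a per-particle question: what is the Zariski closure $X_i\subseteq\mathbb P^{2^k-1}$ of the locus of vectors $|i\rangle=P_iz_i$ as $p_i$ runs over the massless quadric and $z_i$ over its admissible parameter vectors? For $d\le 5$ one has $X_i=\mathbb P^{2^k-1}$ -- this is exactly the algebraic independence of the entries of $|i\rangle$ verified in the proof of Theorem \ref{thm:ordertwosmall} -- so $M^0$ is a generic $2^k\times n$ matrix and $M^TCM$ is a generic skew-symmetric matrix of rank $\le 2^k$, with zero diagonal automatic from $P_i^2=0$ and $CP_i=-P_i^TC$; this recovers Theorem \ref{thm:ordertwosmall} and the $d\le 5$ cases of the Conjecture. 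In the symmetric case $\langle ii\rangle=|i\rangle^TC|i\rangle=0$, so the vectors $|i\rangle$ lie on the smooth quadric $Q=\{x^TCx=0\}$ and the natural model has the columns of $M$ constrained to $Q$; here one first wants $X_i=Q$, which a dimension count makes plausible for $d=6,7$. For $d\ge 8$, however, $X_i$ is a \emph{proper} subvariety: it is swept out by the $(2^{k-1}-1)$-planes $\mathbb P(\operatorname{col}P_i)$ over the $(d-2)$-dimensional projectivized massless quadric, and these two dimensions no longer add up to $\dim\mathbb P^{2^k-1}$.

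The hard part will be the regime $d\ge 8$. Even after checking that $X_i$ is linearly nondegenerate -- which follows from the fact that the Dirac matrices $\Gamma_j$ jointly have no common kernel -- one must still show that the product map $X_1\times\cdots\times X_n\to\mathbb P(\operatorname{Sym})$ (resp.\ $\mathbb P(\wedge^2)$), $(x_i)\mapsto(x_i^TCx_j)$, dominates $\mathcal D$; linear nondegeneracy alone does not suffice, since the bilinear pairing $x^TCy$ couples the columns in a way sensitive to the fine geometry of $\operatorname{col}P_i$. We expect the cleanest route is to exploit the Weyl block decomposition of $P_i$ and $C$ for even $d$: then $|i\rangle$ splits into left- and right-handed halves and $\langle ij\rangle$ becomes a sum of two bilinear forms, paralleling the two families of brackets of the spinor-helicity variety of \cite{EPS}, which should reduce the dominance question to one about a Segre- or Grassmannian-type parametrization, with the odd-dimensional cases handled via the augmentation $P^{(2k+1)}=P^{(2k)}+p_d\Gamma_d$. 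Carrying this out uniformly over all residues of $d$ modulo $8$, and proving the resulting dominance statement, is what remains to be done.
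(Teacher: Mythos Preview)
The statement you are attempting to prove is labelled \textbf{Conjecture} in the paper, and the paper offers no proof. What the authors do provide is exactly the reduction you begin with: since $\mathcal K^{(2)}_{d,n}\subseteq\mathcal D$ by Theorem~\ref{thm:basicspinprops} and both sides are irreducible, it suffices to match dimensions; they then verify \eqref{eq:dim29} numerically for $d=6,7$ and $n=7,8,9$, and leave the general statement open. So there is no ``paper's own proof'' to compare against, and your proposal should be read as a research outline rather than a proof---which you acknowledge in your final sentence.

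As an outline it is reasonable and identifies the right obstruction, namely that for $d\ge 6$ the single-particle spinor $|i\rangle$ sweeps out only a proper subvariety of $\PP^{2^k-1}$ (the paper records this in the Remark following the Conjecture and conjectures the codimension in \eqref{eq:eulerian}). Two points deserve more care, however. First, your ``per-particle'' reduction silently drops momentum conservation: the domain of $\phi$ is $V(I_{d,n})\times\CC^{\,n2^{k-1}}$, not a product of single-particle parameter spaces, so even if each $X_i$ were all of $Q$ you would still need to argue that the constraint $\sum_i p_i=0$ does not cut down the image in $Q^n$; this is plausible for $n$ large but is not automatic and is not addressed by your Jacobian computation as written. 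Second, for $d=6,7$ your claim that $X_i=Q$ is itself only ``plausible by a dimension count''; the paper exhibits one quadratic relation among the coordinates of $|i\rangle$ for $d=6$ but does not assert that this is the only one, so this step already requires a proof. In short, your plan matches the paper's own framing of the problem, but neither you nor the paper closes the gap, and the momentum-conservation coupling is a genuine missing ingredient in your reduction.
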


The determinantal varieties in Conjecture \ref{conj:K2}
are irreducible and their dimensions are known. For instance,
going one step beyond (\ref{eq:grassdim}), we would have
\begin{equation}
\label{eq:dim29}
 {\rm dim}\,\mathcal{K}^{(2)}_{d,n} \,\,= \,\,
7n-29 \quad {\rm for} \,\, d=6,7\,\,{\rm and}\,\,n \geq 7 . 
\end{equation}
To prove Conjecture \ref{conj:K2}, it suffices to show that
the dimension $\mathcal{K}^{(2)}_{d,n}$
equals that of the determinantal variety.
Both varieties are irreducible, and the former is contained in the latter.
For instance, we verified (\ref{eq:dim29}) numerically for $n=7,8,9$.

The difficulty in extending the proof of Theorem 
\ref{thm:ordertwosmall} to higher dimensions $d \geq 6$ is that the
entries of the vector $|\, i \,\rangle$ are 
no longer algebraically independent.

\begin{remark}
For $d \ge 6$, the spinors $|\,i \,\rangle$ live in a proper subvariety of $\PP^{2^k-1}$. For instance, for $d=6$,
 the eight coordinates of $| \,i\, \rangle$ satisfy the algebraic relation
\begin{equation}
    |\, i \,\rangle_\varnothing \cdot |\, i \,\rangle_{123} \,\,-\,\, |\, i \,\rangle_1 \cdot 
    |\, i \,\rangle_{23} \,\,+\,\, |\, i \,\rangle_2 \cdot |\, i \,\rangle_ {13}
    \,\,-\,\, |\, i \,\rangle_3 \cdot |\, i \,\rangle_{12} \,\,\,\,=\,\,\,\, 0.
\end{equation}
For $k \geq 4$, we used {\tt HomotopyContinuation.jl} \cite{julia} 
to compute the codimension of the variety 
parametrized by $| i \rangle$. Based on this, we conjecture the formulas
\begin{equation}
\label{eq:eulerian}
{\rm codim}(2k)\, =\, 2^{k - 1} - 2(k - 1) \quad {\rm and} \quad {\rm codim}(2k + 1) \,=\, {\rm codim}(2k) - 1.
\end{equation}
\end{remark}

We now relate $\mathcal{K}^{(2)}_{d,n}$
 to  the {\em spinor-helicity formalism} in \cite[Section~2.2]{EH}.
For this, we assume that $d=2k$ is even. As seen
in Example \ref{ex:Pmatrix}, the momentum space Dirac matrix $P$ is anti-block diagonal
with two blocks of size $2^{k-1}\times 2^{k-1}$:
\begin{align}
    P \,\,=\,\, \begin{bmatrix}
        0 & P' \\
        P'' & 0
    \end{bmatrix}.
\end{align}
This reflects the fact that the action of $\mathrm{SO}(1,d-1)$ decomposes into two irreducible representations.
  We consider 
the blocks $P'$ and $P''$ separately, and define both angle and square spinor brackets. Let $\tilde{x}_i$ be the vector consisting of the first $2^{k-1}$ entries of $z_i$ and ${x_i}$ the vector consisting of the last $2^{k-1}$ entries of $z_i$. We set
$$
 |{i}\rangle\,=\,P'_i{x}_i \,, \quad
  \langle{i}|\,=\,|{i}\rangle^T\, \qquad {\rm and} \qquad
  |i]\,=\,P''_i \tilde{x}_i \,,\quad [i| \,=\,|i]^T.
  $$
  By Proposition \ref{prop:Cproperties},
the $C$ matrix is either block diagonal or anti-block diagonal:
$$
    C \,=\, \begin{bmatrix}
        C' & 0 \\
        0 & C''
    \end{bmatrix} \,\,\,\,\hbox{if $k$ is even} \quad {\rm and} \quad
    C\, =\, \begin{bmatrix}
        0 & C'' \\
        C' & 0
    \end{bmatrix} \,\,\,\,\hbox{if $k$ is odd.}
    $$
In analogy to (\ref{eq:lthorderbrackets}),
we define two types of elements in the ring $R_{d,n}$ as follows:
\begin{equation}
\label{eq:twobrackets1}
    \langle i_1i_2\cdots i_\ell\rangle\,=\, \langle i_1|\,C'P'_{i_2}\cdots P'_{i_{\ell-1}}|\,i_\ell\rangle
    \quad {\rm and} \quad [i_1i_2\cdots i_\ell]\,=\, [i_1|\,C''P''_{i_2}\cdots P''_{i_{\ell-1}}| \, i_\ell].
\end{equation}
Moreover, it  makes sense to mix and match the brackets, so we can also define
\begin{equation}
\label{eq:twobrackets2}
    \langle i_1i_2\cdots i_\ell]\,=\, \langle i_1|\,C''P''_{i_2}\cdots P''_{i_{\ell-1}}|\,i_\ell]
    \quad {\rm and} \quad [i_1i_2\cdots i_\ell\rangle \,=\, [i_1|\,C'P'_{i_2}\cdots P'_{i_{\ell-1}}|\,i_\ell\rangle.
\end{equation}
If $k$ is even, then the brackets in (\ref{eq:twobrackets1}) are Lorentz invariant,
in the sense of Remark \ref{rmk:bracketsinvariant}. Likewise,
if $k$ is odd, then the brackets in (\ref{eq:twobrackets2}) are Lorentz invariant.
 
We now focus on $d=4$ and $\ell=2$, and
we erase the $z$ parameters by~setting
 \begin{align}
x_i \,\,=\,\, \tilde{x}_i \,\, = \,\, (p_{i1}+p_{i2})^{-1/2} \quad
{\rm for} \,\,\,\,i=1,2,\ldots,n.
 \end{align} 
 The $\langle ij \rangle$ and $[ij]$
form skew symmetric $n \times n$ matrices
of rank $2$ whose product is the zero matrix.
This recovers the  {\em spinor helicity variety} in \cite[Example~1.1]{EPS}:
$$ \mathrm{SH}(2,n,0)\,\,\subset \,\,\mathrm{Gr}(2,n)\times \mathrm{Gr}(2,n) \,\,\subset\,\,
\mathbb{P}^{\binom{n}{2}-1}\times\mathbb{P}^{\binom{n}{2}-1}.$$
It would be interesting to 
study the varieties given by 
 (\ref{eq:twobrackets1}) and (\ref{eq:twobrackets2}) for 
 $d=6,8,\ldots$.
In the present paper we limit ourselves to the
angle brackets in~(\ref{eq:23brackets}).
\section{Varieties in Tensor Spaces}
\label{sec6}

Let $\mathcal{K}^{(3)}_{d,n}$ denote the kinematic variety of second  and third order spinor brackets.
This is an irreducible subvariety of
$\,\PP^{\binom{n}{2} - 1} \times \PP^{K -1}$,
where $K = n \cdot \binom{n}{2}$ when the slices $T_j$ are symmetric, i.e.\ $d \equiv 1,2,3,4 \!\! \mod{8}$, and 
$K = n \cdot\binom{n - 1}{2}$ when the slices $T_j$ are skew symmetric;
see Theorem \ref{thm:basicspinprops}.
The prime ideal of $\mathcal{K}^{(3)}_{d,n}$  consists of all
polynomial relations among the spinor brackets $\langle i j \rangle$ and $\langle i j \,k \rangle$. 
It is $\ZZ^2$-graded because 
$\langle i j \rangle$ and $\langle i j\, k \rangle$ are homogeneous
elements in the $\ZZ^2$-graded ring $R_{d,n} = \CC[p,z]/I_{d,n}$, of degrees 
$(2,2)$ and $(3,2)$ respectively; see Example  \ref{ex:d3}.

Points in the kinematic variety  $\mathcal{K}^{(3)}_{d,n}$ are
  $n \times (n + 1)\times n $ tensors $ST$. 
 The tensor slices $S,T_1,\ldots,T_n$ 
 are $n \times n$ matrices
 which are symmetric or skew symmetric, depending
 on the residue classes of $k = \lfloor d/2 \rfloor$ 
 modulo $4$ and $d$ modulo~$8$.
 We refer to \cite{JML, MS}
 for basics on tensors,  such as
 rank, slices and flattenings.
 
 \begin{remark}[Three particles]
 Let $n=3$ and $d \geq 4$. 
  The kinematic variety $\mathcal{K}^{(3)}_{d,3}$ is
 empty since $T_1=T_2=T_3 = 0$. To see this,
 we first note that  $\langle ijk \rangle =0$
 for $i,j,k$ distinct, since $T_1 + T_2 + T_3 = 0$. Remark~\ref{rmk:maxideal} implies 
 $P_iP_j + P_j P_i = 0_{2^k}$, and hence
$\,\langle ij \,i \rangle = z_i^T P_i^TCP_jP_iz_i = \pm z_i^TCP_jP_i^2z_i = 0$.
Therefore, $\langle ijk \rangle =0$ for all $i,j,k$, showing the variety is empty.
The matrix $S$ is unconstrained, and hence
$ \mathcal{K}^{(2)}_{d,3} = \PP^2$.
 \end{remark}
 
 From now on we assume $n \geq 4$.
We start with the case
$d=3$. Here, the matrix $S$ is skew symmetric and the slices  $T_j$ are symmetric.
 Our kinematic variety $\mathcal{K}^{(3)}_{3,n}$ lives
 in the tensor space $\PP^{\binom{n}{2}-1} \times \PP^{n \binom{n}{2}-1}$.
 See Example~\ref{ex:flatlanders} for the case $n=4$.
 
To study $\mathcal{K}^{(3)}_{3,n}$ for $n \geq 5$, we introduce some terminology for third order tensors. 
Recall from \cite[Chapter~2]{JML} that
 the \textit{multilinear rank} of a tensor is the triple of the ranks of its  flattenings.
  A \textit{Tucker decomposition} of an $n_1 \times n_2 \times n_3$ tensor $T$ with multilinear rank $(r_1, r_2, r_3)$ is a factorization of $T$ into an $r_1 \times r_2 \times r_3$ \textit{core tensor} $K$ and matrices $U_1, U_2, U_3$ of sizes $n_1 \times r_1$, $n_2 \times r_2$ and $n_3 \times r_3$ respectively.
 
\begin{conjecture} \label{conj:d=3}
    The variety  $\mathcal{K}^{(3)}_{3,n}$ has dimension
    $3n-8$. Its points are
      tensors~$ST$ of multilinear rank $ \leq (2,4,2)$.
      Here $S$ is skew symmetric of rank $2$, and
      the $T_j$ are symmetric of rank $\leq 1$,
      summing to $0$, with zeros in the $j$-th row and~column.
      The prime ideal of  $\mathcal{K}^{(3)}_{3,n}$ is generated by polynomials
of degree $\leq 3$, namely    the entries of  $T_1 +T_2 + \cdots + T_n $,
    $4\, \times 4$ Pfaffians of $S$,
     $\,2 \times 2$ minors of the $T_j$'s, 
      $\,3 \times 3$ minors of the flattening $(S, T_1, \dots, T_n)$,
     and quadrics that are mixed Pfaffians.
     \end{conjecture}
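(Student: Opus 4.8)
\emph{Step 1: the tensor $ST$ has the asserted shape.} Since $d=3$ forces $k=1$, each momentum-space Dirac matrix $P_i$ is a traceless $2\times 2$ matrix, of rank $1$ on the massless locus, $C=C^{(3)}$ is the symplectic form on $\CC^2$, and $|i\rangle = P_iz_i \in \CC^2$. Writing $M=(\,|1\rangle,\dots,|n\rangle\,)\in\CC^{2\times n}$, formulas \eqref{eq:spinormatrix} and \eqref{eq:spinormatrix2} become $S=M^TCM$ and $T_j=M^TCP_jM$. Hence the first and third flattenings of $ST$ have all columns inside the two-dimensional space $\operatorname{im}(M^T)$, so they have rank $\le 2$, while the rows of the second flattening are the vectors $\operatorname{vec}(M^TBM)=(M^T\otimes M^T)\operatorname{vec}(B)$ as $B$ ranges over the span of $C,CP_1,\dots,CP_n$ inside the four-dimensional space of $2\times 2$ matrices, so its rank is $\le 4$; thus $ST$ has multilinear rank $\le(2,4,2)$. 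Theorem \ref{thm:basicspinprops} supplies the rest of the shape: $S$ is skew of rank $\le 2$, each $T_j$ is symmetric of rank $\le 1$ with vanishing $j$-th row and column, and $\sum_jT_j=0$. In particular all polynomials listed in the conjecture except the mixed Pfaffians vanish on $\mathcal K^{(3)}_{3,n}$; the mixed-Pfaffian quadrics, being identities among the brackets, vanish too (checked directly for small $n$, and stable in $n$ by the same reasoning that produced them for $n=4$ in Example \ref{ex:flatlanders}). Let $J$ denote the ideal generated by all of these, so $J\subseteq I(\mathcal K^{(3)}_{3,n})$.

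\emph{Step 2: dimension.} The variety $\mathcal K^{(3)}_{3,n}$ is the closure of the image of the rational map $\phi$ from $V(I_{3,n})\times\CC^n$ to the product of projective spaces. Its source is irreducible of dimension $(2n-3)+n=3n-3$ by Theorem \ref{thm:parameters}, so $\mathcal K^{(3)}_{3,n}$ is irreducible. To pin down the generic fiber I would write down a rational section: from a generic $ST$, factor the rank-$2$ skew matrix $S$ as $M^TCM$, which recovers $M\in\CC^{2\times n}$ up to the symplectic group $\operatorname{Sp}_2=\operatorname{SL}_2$ (the spin cover of $\mathrm{SO}(1,2)$); then $T_j=M^T(CP_j)M$ recovers the $2\times 2$ matrix $CP_j$ since $M^T$ is injective and $M$ is surjective, hence recovers $P_j$ and $p_j$; finally $|i\rangle=P_iz_i$ recovers $z_{i1}$. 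One checks that the resulting ambiguity is exactly the action of $\mathrm{SO}(1,2)$ together with the two independent rescalings $(p,z)\mapsto(\mu p,\lambda z)$, a group of dimension $3+2=5$ with finite generic stabilizer. Hence $\dim\mathcal K^{(3)}_{3,n}=(3n-3)-5=3n-8$. The same section shows that on a dense open set $S$ has rank exactly $2$ and $T_j=\tfrac{1}{c_j^2}(Se_j)(Se_j)^T$ has rank exactly $1$ for scalars $c_j$, which is the remaining content of the second sentence of the conjecture, and identifies $\mathcal K^{(3)}_{3,n}$ with the main component of the variety $X$ of tensors obeying the stated rank and symmetry constraints.

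\emph{Step 3: the prime ideal, and the obstacle.} It remains to prove $J=I(\mathcal K^{(3)}_{3,n})$, i.e.~that $J$ is prime. From Steps 1 and 2 we have $\mathcal K^{(3)}_{3,n}\subseteq V(J)\subseteq X$ with $\mathcal K^{(3)}_{3,n}$ irreducible of dimension $3n-8$, so the first subtask is to show that $V(J)$ is irreducible of that dimension: $X$ decomposes into the main component $\mathcal K^{(3)}_{3,n}$ and possibly lower-dimensional or degenerate loci (for instance where $S=0$), and one must verify that the mixed-Pfaffian quadrics vanish only on the main component, giving $V(J)=\mathcal K^{(3)}_{3,n}$ set-theoretically. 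The real difficulty is to show that $J$ is \emph{radical}. I would try to deduce this by checking that the Jacobian of the generators of $J$ attains rank $\operatorname{codim}\mathcal K^{(3)}_{3,n}$ at a generic smooth point, so that $J$ is generically reduced, together with an unmixedness statement, the cleanest form of which would be the existence of a squarefree initial ideal of $J$ for some term order. Producing such an initial ideal uniformly in $n$ is exactly where I expect the argument to stall; in practice one verifies primality in \texttt{Macaulay2} for $n=4,5,6$, observes (as in Remark \ref{rmk:GBsimple}) that the minimal generators and the Gröbner basis stabilize, and then seeks an $S_n$-equivariant stabilization theorem that propagates primality to all $n$. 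That stabilization is the crux, and its absence is why the statement appears here as a conjecture.
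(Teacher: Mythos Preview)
This statement is labeled a \emph{conjecture} in the paper, and the paper does not prove it; it only presents evidence. That evidence consists of an explicit Tucker decomposition of $ST$ with a concrete $2\times 4\times 2$ core $(K_1,K_2,K_3,K_4)$ and explicit factor matrices $U_1=U_3$, $U_2$, from which the authors read off that $S$ is a generic skew-symmetric matrix of rank $2$, that each $T_j$ has rank $\le 1$ (because the parenthesized $2\times 2$ matrix in their formula has determinant $p_{j1}^2-p_{j2}^2-p_{j3}^2=0$), and that the $j$-th row and column of $T_j$ vanish. The paper stops there: it does not prove the dimension formula, the converse inclusion, or anything about the prime ideal.

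Your Step~1 reaches the same conclusions by a different, more abstract route: you bound the multilinear rank by observing that the first and third flattenings factor through $\operatorname{im}(M^T)\cong\CC^2$ and that the middle flattening factors through the four-dimensional space of $2\times 2$ matrices, and you invoke Theorem~\ref{thm:basicspinprops} for the rank and symmetry of the slices. This is cleaner conceptually, while the paper's explicit core tensor is more informative for computation (e.g.\ it makes the rank-one factorization $T_j=v_jv_j^T$ visible). Either argument establishes the same partial result.

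Your Step~2 genuinely goes beyond what the paper does. The fiber-dimension count via the $\mathrm{SL}_2$ ambiguity in $M$ and the two scalings is the right idea and gives $3n-8$, though the claim that the stabilizer is generically finite and that the recovery of $(p,z)$ from $(M,T_j)$ works as stated would need a careful check (in particular, separating the $\mathrm{SL}_2$-action on $M$ from the induced $\mathrm{SO}(1,2)$-action on the momenta). The paper offers no argument for the dimension at all.

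Your Step~3 is an honest assessment: showing that $J$ is prime (or even radical) uniformly in $n$ is the real content of the conjecture, and neither you nor the paper supplies it. Your diagnosis that this is the obstruction, and that one would want a squarefree initial ideal or an $S_n$-equivariant stabilization argument, is accurate and matches the spirit in which the paper leaves the statement open.
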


We now present evidence for this conjecture. First and foremost,
every $n \times (n+1) \times n$ tensor $ST$ in the kinematic variety $ \mathcal{K}^{(3)}_{3,n}$
 has a Tucker decomposition whose core tensor $K$
has format  $2 \times 4 \times 2$. The four $2 \times 2$ slices of the core $K$ are
    $$
    K_1 \,=\, \begin{bmatrix}
        0 & 1\\
        -1 & 0 
    \end{bmatrix}, \quad K_2 \,=\, \begin{bmatrix}
        0 & 1\\
        1 & 0 
    \end{bmatrix}, \quad K_3 \,=\, \begin{bmatrix}
        1 & 0\\
        0 & 0 
    \end{bmatrix},\quad K_4 \,=\, \begin{bmatrix}
        0 & 0\\
        0 & 1
    \end{bmatrix}.
    $$
    The corresponding factor matrices of format $2 \times n$ and $4 \times (n+1)$ are
    $$
    U_1 \,=\, U_3 \,= \,\bigl(
      | \, 1 \, \rangle , | \, 2 \, \rangle ,   \ldots,       | \, n \,  \rangle \bigr), \,\,\,\, 
      U_2 \,=\, \begin{bmatrix}
      1 & 0 \\ 0 & \overline{U}_2 
      \end{bmatrix} \,=\,
      \begin{small}
      \begin{bmatrix}
          1 & 0 & \cdots & 0\\
          0 & -p_{13} &  \cdots &  -p_{n3}\\
          0 & p_{11} + p_{12} &  \cdots &  p_{n1} + p_{n2}\\
          0 & p_{11} - p_{12} &  \cdots &  p_{n1} - p_{n2}\\
      \end{bmatrix}. \end{small}
    $$
    The matrices whose columns are the spinors (\ref{eq:paramofcolumnspace}) can be written as follows:
$$            U_1 \,=\, U_3 \,   \,=\,\, \begin{bmatrix}
        -1 & 0 & 0\\
        0 & 1 & 0
    \end{bmatrix} \cdot \overline{U}_2 \cdot 
    \begin{small} \begin{bmatrix}
        z_{11} & 0 & \cdots & 0\\
        0 & z_{21} & \cdots & 0\\
        \vdots & \vdots & \ddots & \vdots\\
        0 & 0 & \cdots & z_{n1}\\
    \end{bmatrix}. \end{small}
    $$
    
    From the Tucker decomposition we see that the $n \times n$ slices of $ST$ are 
    \begin{equation}
\label{eq:tuckerflat}
\!        S \,=\, U_1^T K_1 U_1\, \,\,\, {\rm and}\,\,\,\,
     T_j \,=\, U_1^T\bigl(-p_{j3}K_2 + (p_{j1} + p_{j2})K_3 + (p_{j1} - p_{j2})K_4\bigr)\, U_1.
\end{equation}
    The $2 \times n$ matrix $U_1$ is generic, as in the
    proof of Theorem \ref{thm:ordertwosmall}. This implies
        that $S$ is a generic skew symmetric $n \times n$ matrix of rank $2$.
    The space of symmetric $2 \times 2$ matrices has the basis $\{K_2,K_3,K_4\}$.
    The parenthesized matrix in (\ref{eq:tuckerflat}) has
    determinant $p_{j1}^2 - p_{j2}^2 - p_{j3}^2$,
    and hence $T_j$ is a symmetric 
    $n \times n$ matrix of rank $\leq 1$. 
    
    More precisely, $T_j$ is the outer product of the
    following vector with itself:
    $$
    v_j \,\,=\,\, (p_{j1} + p_{j2})^{-\frac{1}{2}} \begin{bmatrix}
        p_{j1} + p_{j2} & -p_{j1} + p_{j2}
    \end{bmatrix}U_1\,\, =\,\, (p_{j1} + p_{j2})^{-\frac{1}{2}}  u_j \,U_1.
    $$
        Since $u_j$ is in the kernel of 
        the $2 \times 2 $ matrix $P_j$, the $j$-th coordinate of $v_j$ is zero.
        Therefore the $j$-th row and column of $T_j$ are zero.
    In summary, we have shown that the 
    $n \times (n+1) \times n$ tensor $ST$ has all the properties 
    stated in Conjecture \ref{conj:d=3}.

    We now return to arbitrary spacetime dimension $d \geq 3$.
Our ultimate goal is to understand 
 $\mathcal{K}^{(3)}_{d,n}$ for arbitrary $d$ and $n$.
 A first step is to determine the dimension of this kinematic variety.
For  small cases, we computed this
using the numerical software
{\tt HomotopyContinuation.jl} \cite{julia}.
This computation is nontrivial because the
momenta $p_i$ must satisfy the constraints studied in
Section~\ref{sec2}.

\begin{proposition} \label{prop:dimensions}
The dimensions of the kinematic varieties $\mathcal{K}^{(3)}_{d,n}$ are as follows:

$$ \begin{small}
\begin{matrix}
    d \backslash n  & 4 & 5 & 6 & 7 & 8 & 9 & 10 & 11 & 12\\
    4 &  8 & 13 & 18 & 23 & 28 & 33 & 38 & 43 & 48\\
    5 &  7 & 13 & 19 & 25 & 31 & 37 & 43 & 49 & 55\\
    6 &  9 & 20 & 30 & 40 & 49 & 58 & 67 & 76 & 85\\
    7 &  9 & 20 & 30 & 40 & 50 & 60 & 70 & 80 & 90\\
    8 &  10 & 28 & 51 & 67 & 82 & 97 & 112 & 127 & 142\\
    9 &  15 & 33 & 49 & 65 & 81 & 97 & 113 & 129 & 145 \\
\end{matrix} \end{small}
$$
\end{proposition}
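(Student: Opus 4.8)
The plan is to exhibit each $\mathcal{K}^{(3)}_{d,n}$ as the Zariski closure of the image of an explicit polynomial map and to compute $\dim\mathcal{K}^{(3)}_{d,n}$ as the generic rank of the differential of that map. Let $k=\lfloor d/2\rfloor$, and let $\Phi_{d,n}$ send $(p,z)\in\CC^{nd}\times\CC^{n2^{k-1}}$ to the point of $\PP^{\binom n2-1}\times\PP^{K-1}$ whose coordinates are the second order brackets $\langle ij\rangle$ and the third order brackets $\langle ij\,k\rangle$ of (\ref{eq:23brackets}), where $K=n\binom n2$ or $n\binom{n-1}2$ according to Theorem \ref{thm:basicspinprops}. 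Restricting $\Phi_{d,n}$ to the domain $\mathcal D_{d,n}:=V(I_{d,n})\times\CC^{n2^{k-1}}$ has image exactly $\mathcal{K}^{(3)}_{d,n}$, and this image is irreducible because $R_{d,n}=\CC[p,z]/I_{d,n}$ is a domain once $\max(d,n)\ge 4$ (Theorem \ref{thm:parameters}). Hence $\dim\mathcal{K}^{(3)}_{d,n}$ equals the rank of the differential of $\Phi_{d,n}$, restricted to the tangent space of $\mathcal D_{d,n}$ and corrected for the $2$-dimensional torus that rescales the two groups of brackets, evaluated at a generic point of $\mathcal D_{d,n}$; this rank is attained at a random point with probability one, since the rank of a polynomial Jacobian is lower semicontinuous and hence maximal on a dense open set.

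Concretely, I would first generate a generic point of $\mathcal D_{d,n}$. By Theorem \ref{thm:parameters}, $V(I_{d,n})$ is a complete intersection of the $d$ linear forms (\ref{eq:momentumconservation}) and the $n$ quadrics (\ref{eq:massless}); solving the linear part for $p_n$ and then tracking a single solution path after imposing $nd-n-d$ generic affine-linear equations produces such a point with \texttt{HomotopyContinuation.jl} \cite{julia}, and the $z_{ij}$ are chosen at random. At this point one forms the Jacobian of the bracket coordinates with respect to $(p,z)$, stacks it with the Jacobian of the $n+d$ generators of $I_{d,n}$ to restrict to the tangent space of $V(I_{d,n})$, subtracts $2$ for the rescaling torus, and reads off the numerical rank; equivalently one lets the software compute the dimension of the image of $\Phi_{d,n}|_{\mathcal D_{d,n}}$ in the product of projective spaces directly. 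Running this over $4\le n\le 12$ and $4\le d\le 9$ produces the table. For the smallest cases the same rank can be computed exactly in \texttt{Macaulay2} \cite{M2}, working over the function field of $V(I_{d,n})$, and one can cross-check selected entries by slicing $\mathcal{K}^{(3)}_{d,n}$ with a generic linear space of complementary dimension and confirming, via homotopy continuation and monodromy, that the resulting fibre is finite and nonempty, or against the eventual-linear-in-$n$ patterns visible in each row.

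The difficulty is entirely one of rigor and scale, not of ideas: a priori the numerical computation only certifies $\dim\mathcal{K}^{(3)}_{d,n}\ge r$ for the computed value $r$ at the sampled point, so two things must be controlled. First, the sampled point of $V(I_{d,n})$ must be genuinely generic, since a degenerate momentum configuration can drop the Jacobian rank and spoil a table entry; the path tracking that produces it should therefore be certified, e.g.\ with the \texttt{certify} routine, by recomputation in higher precision, or by exhibiting a rational point of $V(I_{d,n})$ and checking the rank over $\QQ$. Second, the spinor matrices have size $2^k$, so already for $d=8,9$ one manipulates $16\times 16$ matrices of linear forms and a correspondingly large Jacobian, and near the true rank the conditioning of the computation is delicate — which is the real reason exact or certified arithmetic is needed in the larger cases. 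Once a certified generic point and an exact Jacobian rank are in hand for a given $(d,n)$, the corresponding entry of the table follows at once.
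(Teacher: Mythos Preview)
Your proposal is correct and takes essentially the same approach as the paper: the authors state only that the table was obtained numerically with \texttt{HomotopyContinuation.jl}, and your Jacobian-rank computation at a generic point of $V(I_{d,n})\times\CC^{n2^{k-1}}$ is precisely the standard way such a dimension computation is carried out. If anything, your write-up is more careful than the paper's, which does not discuss certification, genericity of the sampled point, or exact cross-checks.
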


 Proposition \ref{prop:dimensions} suggests
that $\,{\rm dim}\,\mathcal{K}^{(3)}_{d,n}\,$ is a linear
function in $n$ when $d$ is fixed. We note that the slope is
$5,6,9,10,15,16$ for $d=4,5,6,7,8,9$.
Following \cite{PRRVZ}, we are 
especially interested in the case $d=5$.
Proposition \ref{prop:dimensions} suggests that 
$\mathcal{K}^{(3)}_{5,n}$ has dimension $6n-17$.
We next characterize this variety for five particles.

\begin{example}
Fix $d = 5, k = 2, n = 5$.
The variety $\mathcal{K}^{(3)}_{5,5} \subset \PP^{9} \times \PP^{29}$ is irreducible of dimension $13$ and degree $1761$. Its class in
$H^*(\PP^9 \times \PP^{29},\ZZ)$ is the multidegree
$$
80 s^8 t^{17} \! +   265 s^7 t^{18}  \! +   430 s^6 t^{19}  \!+  450 s^5 t^{20}  \!+   320 s^4 t^{21}
\! +   155 s^3 t^{22}  \!+ 
50 s^2 t^{23} \!+  10 s t^{24}\! +  t^{25}
.$$
Modulo the $10$ linear forms in
$\,T_1 + T_2 + T_3 + T_4 + T_5 $, the ideal is generated by
$25$ quadrics, $15$ cubics and $5$ quartics. 
Each $T_j$ is a skew symmetric $5 \times 5$ matrix with one
zero row, so it contributes one Pfaffian
$
\langle ijk \rangle \langle \ell jm \rangle - \langle ij\ell \rangle \langle k jm \rangle + \langle ijm \rangle \langle k j\ell \rangle
$.
The other $20$ quadrics are bilinear Pfaffians
$
\langle ij \rangle \langle ki\ell \rangle - \langle ik \rangle \langle ji\ell \rangle + \langle i\ell \rangle \langle jik \rangle$, e.g.
\begin{equation}
\label{eq:specialslices1}
 \hbox{$4 \times 4$ Pfaffians of} \quad
\begin{footnotesize}
\begin{bmatrix}
0  & \langle 12 \rangle & \langle 13 \rangle   & \langle 14 \rangle  & \langle 15 \rangle   \\
 -\langle 12 \rangle & 0 & \langle 213 \rangle   & \langle 214 \rangle  & \langle 215 \rangle   \\
 -\langle 13 \rangle & -\langle 213 \rangle  & 0  & \langle 314 \rangle  & \langle 315 \rangle   \\
 -\langle 14 \rangle & -\langle 214 \rangle  & -\langle 314 \rangle & 0  & \langle 415 \rangle   \\
  -\langle 15 \rangle & -\langle 215 \rangle  & -\langle 315 \rangle & \langle -415 \rangle & 0   \\
\end{bmatrix}. \end{footnotesize} 
\end{equation}
Next, we have $15$ cubics which ensure that the  flattening $(S, T_1, T_2, T_3, T_4, T_5)$
has rank $\leq 4$. One of them is
              $ \langle 213 \rangle\langle 123 \rangle  \langle 435 \rangle
              -\langle 213 \rangle \langle 325 \rangle\langle 134 \rangle  +\langle 213 \rangle\langle 324 \rangle \langle 135 \rangle $ $+\,\langle 314 \rangle\langle 123 \rangle \langle 235 \rangle
            -\langle 314 \rangle \langle 325 \rangle\langle 132 \rangle 
              -\langle 315 \rangle\langle 123 \rangle \langle 234 \rangle 
+ \langle 315 \rangle\langle 324 \rangle\langle 132 \rangle$.
Finally, the $5$ quartic generators come from the $4 \times 4$ minors of the following slices:
\begin{equation}
\label{eq:specialslices2}
\begin{footnotesize} \!\!\!\!\!\!
\begin{bmatrix}
0  & \langle 12 \rangle & \langle 13 \rangle   & \langle 14 \rangle  & \langle 15 \rangle   \\
0  & 0 & 0 & 0 & 0 \\
0  & 0 & \langle 123 \rangle   & \langle 124 \rangle  & \langle 125 \rangle   \\
0  &  \langle 132 \rangle & 0  & \langle 134 \rangle  & \langle 135 \rangle   \\
0  &  \langle 142 \rangle & \langle 143 \rangle & 0   & \langle 145 \rangle   \\
0 &  \langle 152 \rangle & \langle 153 \rangle & \langle 154 \rangle & 0 \\
\end{bmatrix}  \, , \,\,\,
\begin{bmatrix}
-\langle 12 \rangle & 0 & \langle 23 \rangle   & \langle 24 \rangle  & \! \langle 25 \rangle   \\
0 & 0 & \langle 213 \rangle   & \langle 214 \rangle  & \! \langle 215 \rangle   \\
0  & 0 & 0 & 0 & 0 \\
-\langle 132 \rangle & 0  & 0 & \langle 234 \rangle  &\! \langle 235 \rangle   \\
-\langle 142 \rangle & 0 & \langle 243 \rangle & 0   &\! \langle 245 \rangle   \\ -\langle 152 \rangle & 0 & \langle 253 \rangle & \langle 254 \rangle & 0 \\
\end{bmatrix}, \,\end{footnotesize}\, {\rm etc} \ldots
\end{equation}
\end{example}

We conclude this paper by summarizing what we know in general
about the kinematic variety $\mathcal{K}^{(3)}_{5,n}$ 
for $n \geq 4$ massless particles in spacetime dimension $d=5$.
The points in    $\mathcal{K}^{(3)}_{5,n}$ 
   are $n  \times (n + 1)\times n$ tensors $ST$. 
   The matrix $S$ is skew symmetric of rank $\le 4$ and the slices $T_j$ are skew symmetric of rank $\le 2$, summing to $0$, and the $j$-th row and column of $T_j$ is zero.
   We saw this in Theorem \ref{thm:basicspinprops}.
    From these constraints we see that the $6 \times 6$ Pfaffians of $S$ and the $4 \times 4$ Pfaffians of $T_j$ vanish. 
   
   Our next proposition explains the mixed relations we found
   in (\ref{eq:specialslices1}) and the cubics arising from the flattening $(S, T_1, T_2, T_3, T_4, T_5)$
   of the $5 \times 6 \times 5$ tensor $ST$.
   
   \begin{proposition}
For each index $j \in \{1,\ldots,n\}$, the skew symmetric $n \times n$ matrix
\begin{equation}
\label{eq:mixedmatrix}
\!\!    \bigl(\, |1\rangle, \ldots, | j\!-\!1 \rangle, z_j , | j\!+\!1 \rangle,  \ldots,       | \, n \,  \rangle
    \bigr)^T\! \! \cdot C \cdot P_j  \cdot\bigl( \,|1\rangle, \ldots,  | j\!-\!1 \rangle, z_j ,  | j\!+\!1 \rangle,   \ldots, | \, n \,  \rangle
    \bigr) 
\end{equation} 
has the spinor brackets $\langle ij \rangle$ and $\langle i j k \rangle$ for its
non-zero entries.
It has rank  $\leq 2$ on the kinematic variety $\mathcal{K}^{(3)}_{5,n}$,
so the $4 \times 4$ Pfaffians give bilinear ideal generators.
Furthermore, the $n \times (n^2 +n)$ matrix flattening
$(S,T_1,\dots,T_n)$ of the tensor $ST$ has rank $\leq 4$ on $\mathcal{K}^{(3)}_{5,n}$.
It contributes $6 \times 6$ Pfaffians to the ideal generators.
   \end{proposition}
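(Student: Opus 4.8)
The plan is to prove the three assertions — the description of the entries of (\ref{eq:mixedmatrix}), the rank-$\le 2$ bound on that matrix, and the rank-$\le 4$ bound on the flattening — by elementary linear algebra over $R_{5,n} = \CC[p,z]/I_{5,n}$, the only inputs being the identity $P_j z_j = |\,j\,\rangle$ from (\ref{eq:paramofcolumnspace}), the symmetry relations (\ref{eq:Cproperties}) with Proposition \ref{prop:Cproperties}, and Corollary \ref{cor:rankP}. Here $k = 2$, so $C$ and each $P_i$ are $4 \times 4$ and ${\rm rank}\,P_i = 2$ on the massless locus $V(I_{5,n})$. Writing $A = (\,|1\rangle, \ldots, |\,j\!-\!1\,\rangle,\, z_j,\, |\,j\!+\!1\,\rangle, \ldots, |\,n\,\rangle\,)$, the matrix in (\ref{eq:mixedmatrix}) is $A^T C P_j A$. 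First I would read off its entries: for $i, k \neq j$ the $(i,k)$ entry is $|i\rangle^T C P_j |k\rangle = \langle ijk\rangle$, and for $i \neq j$ the $(i,j)$ entry is $|i\rangle^T C P_j z_j = |i\rangle^T C\,|j\rangle = \langle ij\rangle$ since $P_j z_j = |j\rangle$. To see that the matrix is skew symmetric — which forces the diagonal, hence $z_j^T C P_j z_j$, to vanish — note that for $d = 2k+1$ with $k = 2$ equation (\ref{eq:Cproperties}) gives $C P_j = (-1)^k P_j^T C = P_j^T C$, while Proposition \ref{prop:Cproperties}(1) gives $C^T = -C$ (as $k \equiv 2 \bmod 4$), so $(C P_j)^T = C^T P_j = -C P_j$. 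Thus $A^T C P_j A$ is skew symmetric and its nonzero entries are exactly the second- and third-order spinor brackets, arranged as claimed.

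For the rank bound, since $C$ is invertible we have ${\rm rank}(C P_j) = {\rm rank}(P_j) = 2$ modulo $I_{5,n}$, hence ${\rm rank}(A^T C P_j A) \le 2$ identically on $V(I_{5,n})$, and therefore on $\mathcal{K}^{(3)}_{5,n}$. A skew symmetric matrix has rank $\le 2$ precisely when all its $4 \times 4$ Pfaffians vanish, so these Pfaffians lie in the prime ideal of the irreducible variety $\mathcal{K}^{(3)}_{5,n}$. I would then observe that a $4 \times 4$ Pfaffian whose index set contains $j$ pairs $j$ with one of the other three indices — contributing a single order-two bracket $\langle \cdot\, j\rangle$ — and pairs the remaining two indices together, contributing one order-three bracket $\langle \cdot\, j\, \cdot\rangle$; summing over the three matchings shows the Pfaffian is bilinear in the two bracket types, which recovers the relations in (\ref{eq:specialslices1}). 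The $4 \times 4$ Pfaffians not involving $j$ are exactly those of $T_j$, already recorded.

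For the flattening, set $U = (\,|1\rangle, \ldots, |n\rangle\,)$, a $4 \times n$ matrix, so that $S = U^T C U$ and $T_j = U^T C P_j U$, and hence
$$
[\,S \mid T_1 \mid \cdots \mid T_n\,] \;=\; U^T C\,[\,U \mid P_1 U \mid \cdots \mid P_n U\,] .
$$
The second factor is a fixed $4 \times (n^2+n)$ matrix, so the $n \times (n^2+n)$ flattening has rank $\le 4$ identically on $\mathcal{K}^{(3)}_{5,n}$; equivalently, its $i$-th row equals $|i\rangle^T$ times the fixed matrix $C\,[\,U \mid P_1 U \mid \cdots \mid P_n U\,]$, so all $n$ rows share a common $4$-dimensional row space. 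Consequently the size-$5$ minors of the flattening lie in the prime ideal, and together with the rank-$\le 2$ conditions on the $T_j$ and the rank-$\le 4$ condition on the skew matrix $S$ (its $6 \times 6$ Pfaffians) this yields the cubic relations observed in the $n = 5$ example.

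These computations are all routine, and the proposition deliberately stops short of the genuinely hard statement: that the relations listed — the linear entries of $T_1 + \cdots + T_n$, the Pfaffians of $S$ and of the $T_j$, the bilinear $4 \times 4$ Pfaffians of (\ref{eq:mixedmatrix}), and the flattening cubics — generate the full prime ideal of $\mathcal{K}^{(3)}_{5,n}$, equivalently that the variety they cut out has the expected dimension $6n-17$ (checked here only numerically). Within the argument itself, the one place that needs care is the symmetry bookkeeping: confirming that $C P_j$ is genuinely skew symmetric at $d = 5$ — the interaction of (\ref{eq:Cproperties}) with Proposition \ref{prop:Cproperties}(1) at $k = 2$ — since this is exactly what makes both rank statements produce Pfaffians rather than ordinary determinantal minors.
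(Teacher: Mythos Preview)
Your proof is correct and follows the same route as the paper's own argument: the paper's proof is a two-sentence sketch observing that $C P_j$ is skew symmetric of rank $2$ (citing (\ref{eq:Cproperties}) and the explicit $C^{(5)}$ in Example~\ref{ex:Cmatrix}) and that the rank bound on the flattening follows from the factorizations (\ref{eq:spinormatrix}) and (\ref{eq:spinormatrix2}). You have unpacked exactly these two points --- verifying $(CP_j)^T = -CP_j$ from $CP_j = P_j^T C$ together with $C^T = -C$, and writing out the explicit factorization $[\,S\mid T_1\mid\cdots\mid T_n\,] = U^T C\,[\,U\mid P_1U\mid\cdots\mid P_nU\,]$ through the $4\times n$ matrix $U$ --- so the content is identical, just more fully spelled out.
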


\begin{proof}
The statement about (\ref{eq:mixedmatrix}) holds because the
$4 \times 4$ matrix $C \cdot P_j$ is skew symmetric and its rank is $2$.
See (\ref{eq:Cproperties}) and Example \ref{ex:Cmatrix}.
The bound on the rank of $(S,T_1,\dots,T_n)$
follows from  the matrix factorizations in
(\ref{eq:spinormatrix})  and (\ref{eq:spinormatrix2}).
\end{proof}   

We also consider the $n \times (n+1)$ slices of the tensor $ST$ 
obtained by fixing the the first or last index. Numerical
computations show that these matrices have rank $\leq 3$ on $\mathcal{K}^{(3)}_{5,n}$. 
We also discovered that the multilinear rank of $ST $ equals $(4,6,4)$.
Our final question concerns the tensor rank of $ST$
on the variety $\mathcal{K}^{(3)}_{5,n}$.
We can prove that 
 {\em the tensor rank of $ST$  is at least $5$}.
We show this by evaluating the Strassen invariant \cite[Example 9.21]{MS}
on subtensors of format $3 \times 3 \times 3$.
At present we do not have a conjecture for the tensor rank on
$\mathcal{K}^{(3)}_{5,n}$ for general $n$.

\smallskip 

We conclude  by expressing the hope that our tensor varieties for arbitrary $d$ 
will be of interest to physicists.  Potential points of connection in the
physics literature are the studies of spinor helicity for $d=6 $ in \cite{CC} and for
 $d=10$ in~\cite{CHC}.   The case $d=10$ is especially relevant for $ \mathcal{N}=4 $ 
 supersymmetric Yang-Mills theory. Perhaps some of the insights gained there can help
 in proving our conjectures?

\bigskip
\medskip

\noindent {\bf Acknowledgements.} 
We are grateful to Andrzej Pokraka for his comments and to
 Yassine El Maazouz for helping us
with the proof of Theorem~\ref{thm:parameters}.

\bigskip \bigskip

\begin{footnotesize}
\noindent {\bf Funding statement}:
Smita Rajan is supported by a Graduate Fellowship from the US National Science Foundation.
Bernd Sturmfels is supported by the European Union (ERC, UNIVERSE PLUS, 101118787).
Views and opinions expressed are however those of the authors only and do not
necessarily reflect those of the European Union or the European Research Council
Executive Agency. Neither the European Union nor the granting authority can
be held responsible for them.
\end{footnotesize}

\begin{small}

\end{small}

\end{document}